\tikzset{
  font={\fontsize{9pt}{12}\selectfont}}
\numberwithin{equation}{section}
\newtheorem{theorem}{Theorem}
\newtheorem{lemma}{Lemma}
\newtheorem{corollary}{Corollary}
\newtheorem{proposition}{Proposition}
\newtheorem{conjecture}{Conjecture}
\theoremstyle{definition}
\newtheorem{definition}{Definition}
\newtheorem{remark}{Remark}
\title{On the $k$-Abelian complexity of the Cantor sequence}
\thanks{This work was supported by NSFC (Nos. 11401188 and 11626110).}
\author{Jin Chen}
\address[J. Chen]{College of Science, Huazhong Agricultural University, Wuhan 430070, China.}
\email{wind.golden@gmail.com}
\author{Xiaotao L\"{u}}
\address[X.T. L\"{u}]{School of Mathematics and Statistics, Huazhong University of Science and Technology, Wuhan 430074, China.}
\email{M201270021@hust.edu.cn}
\author{Wen Wu$^*$}\thanks{$^*$ Wen Wu is the corresponding author.}
\address[W. Wu]{School of Mathematics, South China University of Technology, Guangzhou 510641, China.}
\email{wuwen@scut.edu.cn}
\keywords{Cantor sequence; $k$-abelian complexity; $k$-regular sequences}
\begin{document}
\begin{abstract}
In this paper, we prove that for every integer $k\geq 1$, the $k$-abelian complexity  function of the Cantor sequence $\mathbf{c}=101000101\cdots$ is a $3$-regular sequence.
\end{abstract}
\subjclass[2010]{68R15 and 11B85} 
\maketitle
\section{Introduction}

This paper is devoted to the study of the $k$-abelian complexity of the Cantor sequence \[\mathbf{c}:=c_{0}c_{1}c_{2}\cdots=101000101000000000101000101\cdots\]
which satisfies $c_{0}=1$ and for all $n\geq 0$,
\begin{equation}\label{eq:rec}
c_{3n}=c_{3n+2}=c_{n} \text{ and } c_{3n+1}=0.
\end{equation}

The $k$-abelian complexity, which was introduced by Karhum\"{a}ki in \cite{K81}, is a measure of disorder of infinite words. It has been studied widely in \cite{F15,PRR15,KSZ14,CKS15,KSZ13}. 
Before we give its definition, we need some notations. Let $\mathcal{A}$ be a finite alphabet and $\mathcal{A}^{n}$ be the set of words of length $n$ for every positive integer $n$.  Denote $\mathcal{A}^{*}$ the set of all finite words on $\mathcal{A}$. For two words $u,v\in\mathcal{A}^{*}$, $v$ is called a \emph{factor} of $u$ if $u=wvw^{\prime}$ where $w,w^{\prime}\in\mathcal{A}^{*}$. For a word $u=u_{0}u_{1}\cdots u_{n-1}\in\mathcal{A}^{n}$, the \emph{prefix} and \emph{suffix} of length $\ell（\geq 1）$ are defined as \[\mathrm{pref}_{\ell}(u):=u_{0}u_{1}\cdots u_{\ell-1} \text{ and }
\mathrm{suff}_{\ell}(u):=u_{n-\ell}\cdots u_{n-1};\]
while for $\ell\leq 0$, we define $\mathrm{pref}_{\ell}(u)=\varepsilon$ and $\mathrm{suff}_{\ell}(u)=\varepsilon$, where $\varepsilon$ is the empty word.
Denote $|u|$ the length of a word $u$ and denote $|u|_{v}$ the number of occurrences of a word $v$ in $u$. 
\begin{definition}[see \cite{PRV14}]
Let $k\geq 1$ be an integer. Two words $u,v\in\mathcal{A}^{*}$ are called \emph{$k$-abelian equivalent}, written by $u\sim_{k}v$, if $\mathrm{pref}_{k-1}(u)=\mathrm{pref}_{k-1}(v)$, $\mathrm{suff}_{k-1}(u)=\mathrm{suff}_{k-1}(v)$ and $|u|_{w}=|v|_{w}$ for every $w\in\mathcal{A}^{k}$.
\end{definition}

The above definition is one of the equivalent definitions of the $k$-abelian equivalence; see also \cite{KSZ13}. The $k$-abelian equivalence is in fact an equivalence relation. The \emph{$k$-abelian complexity} of an infinite word $\omega$ is the function $\mathcal{P}^{(k)}_{\omega}:\mathbb{N}\rightarrow \mathbb{N}$ and for every $n\geq 1$, $\mathcal{P}^{(k)}_{\omega}(n)$ is assigned to be the number of $k$-abelian equivalence classes of factors of $\omega$ of length $n$. Precisely, for every positive integer $n$, 
\[\mathcal{P}^{(k)}_{\omega}(n)=\mathrm{Card}\big(\mathcal{F}_{\omega}(n)/\sim_{k}\big),\]
where $\mathcal{F}_{\omega}(n)$ is the set of all factors of length $n$ occurring in $\omega$. 

\medskip

In our first result, we reduce the $k$-abelian equivalence of any two factors of $\mathbf{c}$ to the abelian equivalence of such factors. In detail, we prove the following theorem.

\begin{theorem}\label{thm:B}
Let $k\geq 1$ be an integer and let $u,v$ be two factors of $\mathbf{c}$ satisfying $|u|=|v|$. If $\mathrm{pref}_{k}(u)=\mathrm{pref}_{k}(v)$ and $\mathrm{suff}_{k}(u)=\mathrm{suff}_{k}(v)$, then $u\sim_{k+1}v$ if and only if $u\sim_{1}v$.
\end{theorem}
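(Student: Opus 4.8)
The forward implication uses no property of $\mathbf{c}$: for words of equal length one always has $u\sim_{k+1}v\Rightarrow u\sim_{k}v\Rightarrow\cdots\Rightarrow u\sim_{1}v$, because the extension identity $|u|_{q}=[\,q=\mathrm{suff}_{|q|}(u)\,]+\sum_{a\in\mathcal{A}}|u|_{qa}$ recovers each shorter factor count from the longer ones and the (matching) boundary. So the content is the converse: assuming $\mathrm{pref}_{k}(u)=\mathrm{pref}_{k}(v)$, $\mathrm{suff}_{k}(u)=\mathrm{suff}_{k}(v)$ and $u\sim_{1}v$, I must show $|u|_{w}=|v|_{w}$ for every $w\in\mathcal{A}^{k+1}$. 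I would reduce this to a single step, to be proved for every $k\geq1$: if $u\sim_{k}v$ and the length-$k$ boundaries agree, then $u\sim_{k+1}v$. Granting the step for all $k$, the converse follows by induction, climbing $\sim_{1}\Rightarrow\sim_{2}\Rightarrow\cdots\Rightarrow\sim_{k+1}$, since the agreement of all boundaries up to length $k$ supplies the hypotheses needed at each level.

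To prove the step, write $\delta_{w}=|u|_{w}-|v|_{w}$ for $|w|=k+1$. The extension identities together with $u\sim_{k}v$ say exactly that $\delta$ is a conservative flow on the de Bruijn (Rauzy) graph whose vertices are the length-$k$ factors of $\mathbf{c}$ and whose edges are its length-$(k+1)$ factors, with common and matching endpoints $\mathrm{pref}_{k}$ and $\mathrm{suff}_{k}$; hence $\delta$ lies in the cycle space and I must force it to vanish. The first simplification is that $\mathbf{c}$ is $11$-free (immediate from \eqref{eq:rec}, since appending the base-$3$ digit $1$ yields a $0$): every factor containing the block $11$ has count $0$ in both $u$ and $v$, which collapses most of the graph and shows that the counts of all factors containing at most one $1$ are already pinned down by the flow relations, the boundary words, and $|u|=|v|$.

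The crux is therefore the counts of the remaining factors, those $w\in\mathcal{A}^{k+1}$ containing two or more $1$'s; equivalently, one must show that the number of occurrences in a factor of $\mathbf{c}$ of a pattern $0^{a}10^{d-1}10^{b}$ is determined by $|u|_{1}$ and the boundary. This is where the self-similar recurrence \eqref{eq:rec} is essential: it forces every maximal run of $0$'s to have length a power of $3$, and, more sharply, it makes the occurrences of any fixed such pattern lie only at positions of a prescribed residue class modulo a power of $3$ and reduces their count to the number of $1$'s in a window contracted by the factor $3$ (for instance $c_{i}=c_{i+2}=1$ occurs precisely when $3\mid i$ and $c_{i/3}=1$). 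I expect this rigidity step to be the main obstacle: one must treat all admissible gaps $d$ simultaneously, iterate the base-$3$ contraction, and track the $O(k)$ boundary corrections produced at each contraction, showing that in the end these counts depend only on $\mathrm{pref}_{k}(u)$, $\mathrm{suff}_{k}(u)$ and the common value $|u|_{1}=|v|_{1}$. Once they are shown to agree, $\delta$ is forced to be $0$, and the step, hence the theorem, follows.
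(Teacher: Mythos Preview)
Your overall architecture is right and matches the paper: the forward direction is general, and for the converse you reduce to the single step ``$u\sim_{k}v$ with matching length-$k$ boundaries $\Rightarrow u\sim_{k+1}v$'' and then iterate. Your Rauzy-graph/flow reformulation of that step is also essentially what the paper does via Lemma~\ref{pref:suff}: edges whose source is not right-special or whose target is not left-special (the paper's set $E_1$) have $\delta_w=0$ immediately, and the problem localizes to the ``bispecial'' edges $E_2$.

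But there is a genuine gap at the point where you claim that ``the counts of all factors containing at most one $1$ are already pinned down by the flow relations, the boundary words, and $|u|=|v|$.'' In the critical case $k=3^{i}+1$ this is false: the four edges $0^{k+1},\,0^{k}1,\,10^{k},\,10^{k-1}1$ form a single cycle in the Rauzy graph, the flow relations have kernel spanned by $(1,-1,-1,1)$, and the free parameter runs through $0^{k+1}$ just as much as through $10^{k-1}1$. So the $\le 1$-one factors are \emph{not} determined until the $\ge 2$-ones factor is; you cannot separate the two groups as you propose. Your subsequent plan---control \emph{all} admissible patterns $0^{a}10^{d-1}10^{b}$ by iterated base-$3$ contraction, tracking $O(k)$ boundary corrections---is aimed at a much larger target than necessary and, as you concede, is not carried out.

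What the paper supplies are two sharp structural facts that you are missing. First, Lemma~\ref{lem:rs}: $\mathbf{c}$ has exactly \emph{two} right-special and two left-special factors of each length $k$, namely $0^{k}$ and $\mathrm{suff}_{k}(\sigma^{i}(010))$ (resp.\ its reversal). This forces $E_2=\{0^{k+1}\}$ whenever $k\neq 3^{i}+1$ (so there is no cycle at all and the flow argument already finishes), and $E_2=\{0^{k+1},0^{k}1,10^{k},10^{k-1}1\}$ when $k=3^{i}+1$. Second, for that one remaining case the paper proves a single extra relation (Lemma~\ref{Delta:i}): the quantity $\Delta_{i}:=|u|_{0^{3^{i}+2}}+|u|_{10^{3^{i}}1}-|u|_{0^{3^{i}+1}}+1\in\{0,1,2\}$ is computable, modulo $2$ or $3$, from $|u|_{0^{3^{i}}1}$, $|u|_{0^{3^{i}+1}}$ and the length-$k$ boundary---all length-$k$ data---so $u\sim_{k}v$ gives $\Delta_{i}(u)=\Delta_{i}(v)$ and the $4\times 4$ system becomes nonsingular. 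The self-similarity of $\mathbf{c}$ thus enters through exactly one scalar identity per critical $k$, not through a family of contraction arguments over all gaps $d$; your intuition that \eqref{eq:rec} is the key is correct, but the mechanism is far more localized than your proposal suggests.
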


By using Theorem \ref{thm:B}, we are able to study the $k$-abelian complexity of $\mathbf{c}$ for every $k\geq 1$, and we have the following result.
\begin{theorem}\label{thm:A}
For every integer $k\geq 1$, the $k$-abelian complexity function of the Cantor sequence is a $3$-regular sequence.
\end{theorem}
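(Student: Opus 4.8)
The overall strategy is to use Theorem~\ref{thm:B} to rewrite $\mathcal{P}^{(k)}_{\mathbf{c}}$ as a finite sum of constrained abelian-type counting functions, and then to prove that each summand is $3$-regular by exploiting the desubstitution structure of $\mathbf{c}$. Throughout I would invoke the Allouche--Shallit criterion (a sequence is $3$-regular iff its $3$-kernel lies in a finitely generated $\mathbb{Z}$-module) together with the fact that the $3$-regular sequences form a $\mathbb{Z}$-module, so that a fixed finite sum of $3$-regular sequences is $3$-regular and it suffices to treat each summand separately.

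\emph{Step 1 (reduction via Theorem~\ref{thm:B}).} Fix $k\ge 1$ and set $\kappa=k-1\ge 0$. For length-$\kappa$ words $p,s$ over $\mathcal{A}=\{0,1\}$ and $n\ge 1$, put
\[
S_{p,s}(n)=\{\,|w|_1: w\in\mathcal{F}_{\mathbf{c}}(n),\ \mathrm{pref}_\kappa(w)=p,\ \mathrm{suff}_\kappa(w)=s\,\}.
\]
By Theorem~\ref{thm:B}, two factors of length $n$ sharing the same length-$\kappa$ prefix and suffix are $k$-abelian equivalent if and only if they are abelian equivalent; since $\mathbf{c}$ is binary, abelian equivalence of equal-length factors means having the same number of $1$'s. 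Hence each $k$-abelian class is determined by a triple (prefix, suffix, number of $1$'s), and
\[
\mathcal{P}^{(k)}_{\mathbf{c}}(n)=\sum_{p,s\in\mathcal{A}^{\kappa}}\bigl|S_{p,s}(n)\bigr|,
\]
a sum with a fixed, finite number of terms (pairs $(p,s)$ that never occur contribute the zero sequence). It therefore suffices to show that $n\mapsto |S_{p,s}(n)|$ is $3$-regular for each fixed pair $(p,s)$; for $k=1$ this is just the unconstrained abelian complexity.

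\emph{Step 2 (interval structure).} A length-$n$ factor of $\mathbf{c}$ is a window $c_\ell c_{\ell+1}\cdots c_{\ell+n-1}$, and sliding $\ell\mapsto\ell+1$ changes the number of $1$'s by at most one, so the unconstrained set of $1$-counts is an interval of integers. I expect the same contiguity to persist for each constrained set, so that
\[
|S_{p,s}(n)|=M_{p,s}(n)-m_{p,s}(n)+1
\]
whenever the constraint is realizable, where $M_{p,s}$ and $m_{p,s}$ are the maximal and minimal number of $1$'s among the admissible windows. \emph{Step 3 (desubstitution).} Writing $\mathbf{c}=\sigma(\mathbf{c})$ for $\sigma\colon a\mapsto a0a$, one has $|\sigma(w)|_1=2|w|_1$ and $|\sigma(w)|=3|w|$. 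A window of length $n$ decomposes into a bounded boundary part plus the $\sigma$-image of a core factor of length about $n/3$, the boundary corrections depending only on the alignment $i\in\{0,1,2\}$ of $\ell$ modulo $3$ and on a bounded amount of prefix/suffix data. This yields, for each residue $r\in\{0,1,2\}$, relations of the form $M_{p,s}(3n+r)=2M_{p',s'}(n)+O(1)$ and $m_{p,s}(3n+r)=2m_{p',s'}(n)+O(1)$ over a finite collection of auxiliary pairs $(p',s')$. Closing this collection under the recursion exhibits the $3$-kernel of the families $\{M_{p,s}\}$ and $\{m_{p,s}\}$ inside a finitely generated module, whence each extremal function is $3$-regular; by Step~2 and the module closure properties, so is each $|S_{p,s}(n)|$ and hence $\mathcal{P}^{(k)}_{\mathbf{c}}$.

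The main obstacle is the interaction of Steps~2 and~3: proving that the constrained $1$-count sets have no gaps, so that their cardinalities reduce to differences of the extremal functions, and then organizing the finitely many boundary/alignment cases — including the $n$-dependent realizability of a pair $(p,s)$ and how the fixed prefix/suffix propagate through $\sigma^{-1}$ — into a single closed finite system of base-$3$ recursions. The sliding argument gives contiguity in the unconstrained case immediately, but fixing the length-$\kappa$ prefix and suffix destroys the freedom to slide, so the no-gap property for $S_{p,s}(n)$ must be recovered from the self-similar description of the admissible windows rather than from a one-step exchange. Once the extremal functions are shown to satisfy such a closed system, $3$-regularity of $\mathcal{P}^{(k)}_{\mathbf{c}}$ follows formally from the kernel criterion and the finiteness of the number of pairs $(p,s)$.
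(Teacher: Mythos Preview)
Your Step~1 is exactly the decomposition the paper uses. The genuine gap is Step~2: the constrained sets $S_{p,s}(n)$ are \emph{not} intervals of consecutive integers once $k\ge 3$, so the formula $|S_{p,s}(n)|=M_{p,s}(n)-m_{p,s}(n)+1$ is false and Step~3 (which only tracks the extremes) cannot recover the correct count.

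Here is why contiguity fails. Let $i\ge 0$ be the integer with $3^{i}+1<k\le 3^{i+1}+1$. Any factor $w$ with $\mathrm{pref}_{k-1}(w)=\mathrm{suff}_{k-1}(w)=0^{k-1}$ sits inside $\mathbf{c}$ so that its non-zero part is a concatenation of full blocks $\sigma^{i+1}(a)$; consequently $|w|_1=2^{i+1}|u|_1$ for some factor $u$, and $S_{0^{k-1},0^{k-1}}(n)\subseteq 2^{i+1}\mathbb{Z}$. For $k\ge 3$ we have $2^{i+1}\ge 2$, so these $1$-counts are never consecutive integers; in fact $|S_{0^{k-1},0^{k-1}}(n)|=M_{\mathbf{c}}(\ell)+1$ with $\ell=\lfloor (n-2k+2)/3^{i+1}\rfloor$, while $M_{p,s}(n)-m_{p,s}(n)+1=2^{i+1}M_{\mathbf{c}}(\ell)+1$. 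The same phenomenon occurs when exactly one of $p,s$ is non-zero (the $1$-counts lie in one or two fixed residue classes modulo $2^{i+1}$), and when both are non-zero the set $S_{p,s}(n)$ has at most one element. So none of the constrained sets behave like intervals, and the sliding heuristic from the unconstrained case does not survive the prefix/suffix constraint.

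What replaces Step~2 in the paper is a rigidity argument: a non-zero factor of length $>3^{i}$ has at most two possible positions modulo $3^{i+1}$ in $\mathbf{c}$, so fixing $p$ or $s$ pins down the alignment and reduces $|w|_1$ to $2^{i+1}$ times the $1$-count of a core word of length $\approx n/3^{i+1}$, plus a constant depending only on the alignment. This turns $|S_{p,s}(n)|$ directly into an $M_{\mathbf{c}}$-value (or a sum of two of them) when one end is $0^{k-1}$, and into an eventually periodic $\{0,1\}$-valued function when both ends are non-zero (using that $p_2(m,1,1)\in\{0,1\}$). Your desubstitution idea is the right engine, but it should be applied $i{+}1$ times at once to the constrained $1$-count set itself, not to its max and min.
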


The $k$-regular sequence was introduced by Allouch and Shallit \cite{AS} as an extension of the $k$-automatic sequence. The definitions of the $k$-automatic sequences and the $k$-regular sequences are stated below; see also \cite{ALL, E71}. 
\begin{definition}
For an integer $k\geq 1$, a sequence $\mathbf{w}=(w_{n})_{n\geq 0}$ is a \emph{$k$-automatic} sequence if its \emph{$k$-kernel}
\[\mathcal{K}_{k}(\mathbf{w})=\{(w_{k^{e}n+c})_{n\geq 0}\mid e\geq 0, 0\leq c<k^{e}\}\]
is a finite set. The sequence $\bf{w}$ is called a \emph{$k$-regular} sequence if the $\mathbb{Z}$-module generated by its $k$-kernel is finitely generated.
\end{definition}

Karhum\"{a}ki, Saarela and Zamboni \cite{KSZ14} studied the $k$-abelian complexity of the Thue-Morse sequence, which is a $2$-automatic sequence. Vandomme, Parreau and Rigo \cite{PRV14} conjectured that the $2$-abelian complexity of the Thue-Morse sequence is a $2$-regular sequence. This has been proved  independently in \cite{F15} by Greinecker  and in \cite{PRR15} by  Parreau, Rigo, Rowland and Vandomme. 

Our result (Theorem \ref{thm:A}) supports the following more general conjecture, which has been posed in \cite{PRR15}.
\begin{conjecture}
The $k$-abelian complexity of any $\ell$-automatic sequence is an $\ell$-regular sequence. 
\end{conjecture}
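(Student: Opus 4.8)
The plan is to combine Theorem~\ref{thm:B} with the self-similar block structure of $\mathbf{c}$. Applying Theorem~\ref{thm:B} with $k$ replaced by $k-1$, among the factors of $\mathbf{c}$ of a fixed length $n$ that share a common length-$(k-1)$ prefix $p$ and a common length-$(k-1)$ suffix $s$, two such factors are $k$-abelian equivalent precisely when they are abelian equivalent, i.e.\ (the alphabet being binary) when they contain the same number of $1$'s. Hence each $k$-abelian class of factors of length $n$ is encoded by a triple $(p,s,j)$ with $p,s\in\mathcal{A}^{k-1}$ and $j=|u|_{1}$, so that
\[\mathcal{P}^{(k)}_{\mathbf{c}}(n)=\sum_{(p,s)}\#\bigl\{\,|u|_{1}: u\in\mathcal{F}_{\mathbf{c}}(n),\ \mathrm{pref}_{k-1}(u)=p,\ \mathrm{suff}_{k-1}(u)=s\,\bigr\},\]
the sum ranging over the admissible boundary pairs $(p,s)$. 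For the finitely many small $n$ with $n\le 2(k-1)$ the prefix and suffix overlap; these are treated directly and do not affect regularity.

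For each admissible pair $(p,s)$ I would first show that the set of attainable values $|u|_{1}$ is an interval of integers, so that its cardinality equals $M(p,s,n)-m(p,s,n)+1$, where $M$ and $m$ denote the maximal and minimal number of $1$'s over the constrained factors of length $n$. In the unconstrained case the interval property is the standard sliding-window argument (moving a length-$n$ window by one position changes $|u|_{1}$ by at most one); the boundary-constrained version requires an analogous but more delicate exchange argument, which I would establish from the block structure of $\mathbf{c}$.

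The core of the proof is the self-similarity $\mathbf{c}=\sigma(\mathbf{c})$ with $\sigma\colon 1\mapsto 101,\ 0\mapsto 000$. Reading $\mathbf{c}$ as a concatenation of blocks $101$ and $000$ indexed again by $\mathbf{c}$, a window of length $n$ decomposes into a possibly partial head block, a central run of full blocks corresponding to a factor of $\mathbf{c}$ of length about $n/3$, and a possibly partial tail block. Since each full block $101$ contributes exactly two $1$'s and each $000$ none, the number of $1$'s in the window equals twice the number of $1$'s in the associated shorter factor plus a boundary correction determined by $n\bmod 3$, the alignment, and the fixed data $p,s$. This yields, for each residue $i\in\{0,1,2\}$, recurrences expressing $M(p,s,3n+i)$, $m(p,s,3n+i)$, and the set of admissible boundary pairs at argument $3n+i$, in terms of the corresponding data at argument $n$ (and at $n\pm O(1)$).

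Finally I would collect the finitely many boundary-constrained extremal-count functions into a single vector-valued sequence and show it satisfies a linear system under the maps $n\mapsto 3n+i$, so that the $\mathbb{Z}$-module generated by the $3$-kernel of $\bigl(\mathcal{P}^{(k)}_{\mathbf{c}}(n)\bigr)_{n\ge 0}$ is finitely generated; by definition this is exactly $3$-regularity. The main obstacle is the boundary bookkeeping of the previous paragraph: one must verify that, as $n$ varies, only finitely many distinct prefix/suffix pairs and finitely many correction patterns occur, and that the recurrences for $M$ and $m$ close into a finite-dimensional system rather than generating new auxiliary sequences indefinitely. Because $\mathbf{c}$ is $3$-automatic, its set of factors of any fixed length $k-1$ is finite and the admissibility of a pair $(p,s)$ is itself governed by a $3$-automatic structure, which should force this closure; making it fully rigorous, in particular matching the extremal windows across the desubstitution and controlling the off-by-one effects at block boundaries, is where the bulk of the work lies.
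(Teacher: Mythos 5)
You have not proved the statement in question, and in fact nobody has: it is stated in the paper as a \emph{conjecture} (posed in \cite{PRR15}), which the paper's Theorem~\ref{thm:A} merely \emph{supports} by verifying one instance. The conjecture quantifies over all $\ell$-automatic sequences, whereas every step of your argument is specific to the single $3$-automatic sequence $\mathbf{c}$. Your opening move --- invoking Theorem~\ref{thm:B} to collapse $k$-abelian equivalence to abelian equivalence among factors sharing a length-$(k-1)$ prefix and suffix --- is a special property of $\mathbf{c}$, proved in the paper via the characterization of its left and right special factors (Lemma~\ref{lem:rs}), and no analogue is available for a general automatic sequence; the paper itself raises as an open question which words admit such a reduction (the remark following Corollary~\ref{abel:k}). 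The same is true of your later ingredients: the substitution $1\mapsto 101$, $0\mapsto 000$, the interval structure of the attainable values of $|u|_{1}$, and the identity $\mathcal{P}^{(1)}(n)=M(n)-m(n)+1$ all exploit the binary alphabet and the particular combinatorics of the Cantor word. Over a larger alphabet the abelian classes are Parikh vectors, which need not form anything like an interval, and for sequences as simple as Thue--Morse the corresponding regularity statement (even just for $k=2$) was itself a conjecture requiring two independent, substantial proofs \cite{F15,PRR15}.

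What you have actually sketched is, in essence, the paper's own proof of Theorem~\ref{thm:A}: the decomposition \eqref{eqn:decomp} of $\mathcal{P}^{(k)}_{\mathbf{c}}(n)$ over boundary pairs $(p,s)$, desubstitution of a length-$n$ window into full blocks plus head and tail corrections, counting via the extremal digit sums $M_{\mathbf{c}}$ (Corollary~\ref{max:min}), and closure of the resulting $n\mapsto 3n+i$ recurrences into a finite system (the paper carries this out in Lemmas~\ref{lem:0k-0k}, \ref{lem:0k-x} and \ref{lem:x-y}, using the type/alignment analysis of Lemmas~\ref{lem:1word} and \ref{lem:lword} to control your ``boundary bookkeeping''). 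As an outline of that special case it is sound, modulo the exchange argument and closure verification you defer. But as a proof of the stated conjecture it fails at the first sentence: there is no Theorem~\ref{thm:B} to apply for an arbitrary $\ell$-automatic sequence, and supplying such a reduction in general is precisely the open problem.
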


This paper is organized as follows. In Section 2, we give the recurrence relations for the abelian complexity function of the sequence $\mathbf{c}$.  As a consequence, the
abelian complexity function of the Cantor sequence is a $3$-regular sequence. In Section 3, we prove Theorem \ref{thm:B}. In the last section, we give the proof of Theorem \ref{thm:A}.

\section{Abelian complexity}\label{sec:2}
The abelian complexity of an infinite word $\omega$ is in fact the $1$-abelian complexity of $\omega$. For more details of the abelian complexity,  see \cite{BR13,CR11,K81,RSZ11,SFR14,T10} and references therein. In this section, we shall investigate the abelian complexity of $\mathbf{c}.$ 

First we introduce a useful result which characterizes the left and right special factors of $\mathbf{c}.$ Recall that a factor $v$ of $w$ is called \emph{right special} (resp. \emph{left special}) if both $va$ and $vb$ (resp. $av$ and $bv$) are factors of $w$ for distinct letters $a,b\in \mathcal{A}$.  We denote $\mathcal{RS}_{w}(n)$ (resp. $\mathcal{LS}_{w}(n)$) the set of all right special (resp. left special) factors of $w$ of length $n$.
\begin{lemma}\label{lem:rs}
For every $i\geq 0$ and $3^{i}<k\leq 3^{i+1}$, \[\mathcal{RS}_{\mathbf{c}}(k)=\{0^k, \mathrm{suff}_{k}(\sigma^{i}(010))\}\text{ and }
\mathcal{LS}_{\mathbf{c}}(k)=\{0^k, \mathrm{pref}_{k}(\sigma^{i}(010))\}.\]
\end{lemma}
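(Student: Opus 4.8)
The plan is to establish the statement for right special factors and then deduce the one for left special factors by a reversal symmetry. Recall that $\mathbf c$ is the fixed point of the substitution $\sigma$ given by $\sigma(0)=000$ and $\sigma(1)=101$, so that \eqref{eq:rec} is equivalent to $\sigma(\mathbf c)=\mathbf c$. Since $\sigma(0)$ and $\sigma(1)$ are palindromes, $\sigma$ maps every palindrome to a palindrome; in particular each $\sigma^{i}(010)$ is a palindrome, and a short induction on factor length shows that the set of factors of $\mathbf c$ is closed under reversal. Consequently $v\in\mathcal{RS}_{\mathbf c}(k)$ if and only if its reversal $\widetilde v$ lies in $\mathcal{LS}_{\mathbf c}(k)$; as $\widetilde{0^k}=0^k$ and $\widetilde{\mathrm{suff}_k(\sigma^i(010))}=\mathrm{pref}_k(\sigma^i(010))$, the description of $\mathcal{LS}_{\mathbf c}(k)$ follows immediately from that of $\mathcal{RS}_{\mathbf c}(k)$. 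Thus I would only prove the formula for $\mathcal{RS}_{\mathbf c}(k)$.

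Next I would exhibit the two claimed words as right special. For $0^k$: since $\sigma^m(101)=\sigma^m(1)\,0^{3^m}\,\sigma^m(1)$ is a factor and $\sigma^m(1)$ begins and ends with $1$, the sequence $\mathbf c$ contains maximal runs of $0$ of length $3^m$ for every $m$; taking $3^m\ge k$ shows that both $0^{k}0$ and $0^k1$ are factors, so $0^k\in\mathcal{RS}_{\mathbf c}(k)$. For $s_k:=\mathrm{suff}_k(\sigma^i(010))$: the words $0100$ and $0101$ are factors of $\mathbf c$, so applying $\sigma^i$ shows that $\sigma^i(010)\,0^{3^i}=\sigma^i(0100)$ and $\sigma^i(010)\,\sigma^i(1)=\sigma^i(0101)$ are factors; passing to suffixes, both $s_k0$ and $s_k1$ are factors, whence $s_k\in\mathcal{RS}_{\mathbf c}(k)$. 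Finally $s_k\ne0^k$, because $3^i<k$ forces $s_k$ to reach past the final block $0^{3^i}$ of $\sigma^i(010)=0^{3^i}\sigma^i(1)0^{3^i}$ and hence to contain the final letter of $\sigma^i(1)$, which is $1$.

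The substantial work is to prove there are \emph{no other} right special factors, which I would carry out by induction on $i$, the base case $i=0$ (that is, $k\in\{2,3\}$) following by direct inspection, giving $\mathcal{RS}_{\mathbf c}(2)=\{00,10\}$ and $\mathcal{RS}_{\mathbf c}(3)=\{000,010\}$. The engine of the induction is the synchronization afforded by \eqref{eq:rec}: since $c_{3n+1}=0$, a letter $1$ never occurs at a position $\equiv1\pmod3$, and therefore every occurrence of $101$ is block-aligned, i.e. is an image $\sigma(1)$ beginning at a multiple of $3$. Given a right special $v$ with $3^i<|v|\le3^{i+1}$ and $i\ge1$, both $v0$ and $v1$ occur; as $11$ is not a factor, $v$ ends in $0$, and an occurrence of $v1$ pins down the alignment of the right end of $v$ with respect to the block decomposition. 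Using this alignment I would desubstitute $v$ by reading off the letters of the underlying blocks, obtaining a factor $v'$ with $3^{i-1}<|v'|\le3^{i}$ that is again right special; by the induction hypothesis $v'\in\{0^{|v'|},\mathrm{suff}_{|v'|}(\sigma^{i-1}(010))\}$, and re-substituting recovers $v\in\{0^{k},s_k\}$.

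The hardest point, on which I would spend the most care, is making this desubstitution rigorous: a generic factor need not begin or end at a block boundary, so I would split into cases according to whether the appended $1$ sits at a position $\equiv0$ or $\equiv2\pmod3$, and track both the initial partial block (of length $k-1$ modulo $3$) and the final partial block, verifying that the correspondence $v\mapsto v'$ is well defined and injective and that it carries the right special factors of length $k$ precisely onto those of length $|v'|$. Combined with the two explicit right special factors produced above, this yields $\mathcal{RS}_{\mathbf c}(k)=\{0^k,s_k\}$ exactly, which completes the induction and, via the reversal symmetry, the lemma.
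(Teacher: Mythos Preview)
Your plan is sound, but it takes a genuinely different route from the paper. The paper's proof of this lemma is a one-line citation: it invokes \cite[Theorem~1]{LCGW16} for the description of $\mathcal{RS}_{\mathbf c}(k)$ and then, exactly as you do, uses the reversal symmetry of $\mathcal F_{\mathbf c}$ to transfer the result to $\mathcal{LS}_{\mathbf c}(k)$. So both arguments share the reversal step, but where the paper defers the substance to the literature, you supply a self-contained induction via desubstitution.

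What your approach buys is independence from the external reference; the cost is that you must make the synchronization precise. Two remarks on that. First, for the desubstitution to be well-defined you need every occurrence of a non-zero $v$ with $|v|>3$ to sit at the \emph{same} residue modulo~$3$; this is exactly the content of what the paper later proves as Lemma~\ref{lem:1word} (whose proof does not rely on the present lemma), so you may either prove it inline or reorder. Second, once that uniqueness of alignment is in hand, your case split collapses: if the appended $1$ in $v1$ were at a position $\equiv 2\pmod 3$, then $v$ would end in $10$, and the (same-alignment) occurrence of $v0$ would force that final block to be $\sigma(1)=101$, contradicting the trailing $0$. Hence only the case ``appended $1$ at position $\equiv 0$'' survives, $v$ ends at a block boundary, the desubstituted word $v'$ has length $\lceil k/3\rceil\in(3^{i-1},3^i]$, and $v=\mathrm{suff}_k(\sigma(v'))$. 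The induction then gives $v'=\mathrm{suff}_{\lceil k/3\rceil}(\sigma^{i-1}(010))$ and hence $v=\mathrm{suff}_k(\sigma^i(010))$, exactly as you outline.
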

\begin{proof}
The result follows from \cite[Theorem 1]{LCGW16} and the fact that every left special factor in $\mathbf{c}$ is the reversal of some right special factor in $\mathbf{c}$.
\end{proof}

Let $\omega =\omega_{0}\omega_{1}\omega_{2}\cdots$ be an infinite sequence on $\{0, 1\}$. It is proved in \cite[Proposition 2.2]{BBT11} that the abelian complexity of $\omega$ is related to its digit sums in the following way: for every $n\geq 1$,
\begin{equation}
\mathcal{P}_{\omega}^{(1)}(n)=M_{\omega}(n)-m_{\omega}(n)+1, \label{eq:pm}
\end{equation}
where {\[M_{\omega}(n):=\max\left\{\Sigma_{j=i}^{i+n-1}\omega_{j}\mid i\geq 0\right\}\text{ and } m_{\omega}(n):=\min\left\{\Sigma_{j=i}^{i+n-1}\omega_{j} \mid i\geq 0\right\}.\]}
For the digit sums of the Cantor sequence $\mathbf{c}$, we have the following lemma.
\begin{lemma}\label{lem:ab}
For every integer $n\geq 1$, $M_{\mathbf{c}}(n)=\sum_{i=0}^{n-1}c_{i}$ and $m_{\mathbf{c}}(n)=0$.
\end{lemma}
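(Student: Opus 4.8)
The plan is to treat the two equalities separately, and to reduce everything to a single digit-sum function. Write $f(n):=\sum_{i=0}^{n-1}c_i$ and, for a window, $S(i,n):=\sum_{j=i}^{i+n-1}c_j=f(i+n)-f(i)$, so that $M_{\mathbf c}(n)=\max_{i\ge0}S(i,n)$ and $m_{\mathbf c}(n)=\min_{i\ge0}S(i,n)$. Recall that $\mathbf c$ is the fixed point of the substitution $\sigma\colon 1\mapsto 101,\ 0\mapsto 000$, so that $\sigma^{i}(0)=0^{3^{i}}$ occurs in $\mathbf c$ for every $i$. The claim $m_{\mathbf c}(n)=0$ is then immediate: choosing $i$ with $3^{i}\ge n$ exhibits a run of at least $n$ consecutive $0$'s, hence a window of length $n$ and digit sum $0$, while all digit sums are nonnegative. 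Since the prefix window already gives $S(0,n)=f(n)$, the remaining and substantial claim $M_{\mathbf c}(n)=f(n)$ is equivalent to the window-optimality inequality $S(i,n)\le f(n)$ for all $i,n$, i.e.\ to the subadditivity $f(a+b)\le f(a)+f(b)$.

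I would prove $S(i,n)\le f(n)$ by strong induction on $n$, exploiting the self-similar block structure $\mathbf c=\sigma(\mathbf c)=B_0B_1B_2\cdots$ with $B_j=\sigma(c_j)\in\{101,000\}$. From \eqref{eq:rec} one first records the recursion
\[
f(3m)=2f(m),\qquad f(3m+1)=f(3m+2)=2f(m)+c_m.
\]
A window $[i,i+n)$ meets a suffix of one block, a run of full blocks $B_{q+1}\cdots B_{q+\ell}$, and a prefix of one block. Since every nonempty proper factor of $101$ has digit sum equal to that of the whole block, each partial block contributes exactly $0$ or $1$ according to its underlying letter, and the full blocks contribute $2\,S(q+1,\ell)$ at the level of the preimage. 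Collecting these terms expresses $S(i,n)$ as a sum of at most two \emph{preimage} window sums of lengths $\lfloor n/3\rfloor$ or $\lfloor n/3\rfloor+1$, to each of which the induction hypothesis applies; comparing with the displayed recursion for $f(n)$ then gives the bound.

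In most residue classes of $(i\bmod 3,\,n\bmod 3)$ the two preimage sub-windows fit together without overlap, and simply adding the two induction inequalities at lengths $\lfloor n/3\rfloor$ and $\lfloor n/3\rfloor+1$ reproduces exactly $2f(\lfloor n/3\rfloor)+\epsilon\,c_{\lfloor n/3\rfloor}=f(n)$, closing those cases. \textbf{The main obstacle} is the boundary cases in which the two sub-windows overlap (for instance start offset $2$ together with end offset $1$): there the crude bounds $f(\ell+2)+f(\ell)$ or $2f(\ell+1)$ overshoot the target $f(3\ell+2)=2f(\ell)+c_\ell$ by precisely $c_{\ell+1}$ or $c_\ell$, so the naive hypothesis loses one unit. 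To repair this I would carry a strengthened induction hypothesis controlling the \emph{neighbours} of a maximal window: namely that $S(q+1,\ell)=f(\ell)$ forces both $c_q\le c_\ell$ and $c_{q+\ell+1}\le c_\ell$ (each being optimality at length $\ell+1$ in disguise), together with the elementary structural fact, read off from \eqref{eq:rec} (and consistent with Lemma \ref{lem:rs}), that $\mathbf c$ contains no two consecutive $1$'s. When $c_\ell=0$ the neighbour bounds force the surplus letters to vanish, and when $c_\ell=1$ optimality at length $\ell$ applied to $[q,q+\ell)$ forces $c_{q+\ell}=1$ whenever $c_q=1$, so the no-consecutive-$1$'s fact kills $c_{q+\ell+1}$; in either case the deficit is absorbed. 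Verifying that this strengthened pair of statements is stable under the block recursion in every residue class is the only delicate point, the remaining cases being routine.
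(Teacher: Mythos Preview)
Your approach is essentially the paper's: strong induction on $n$, using the nine identities $\Sigma(3i+r,3n+s)=\cdots$ coming from \eqref{eq:rec}, and you correctly isolate the single problematic case $(r,s)=(2,2)$, where
\[
\Sigma(3i+2,3j+2)=\Sigma(i,j+1)+\Sigma(i+1,j+1)
\]
and the naive bound $2\Sigma(0,j+1)$ overshoots the target $\Sigma(0,3j+2)=\Sigma(0,j)+\Sigma(0,j+1)$ by $c_j$.

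Where you diverge is in the repair. Your ``strengthened'' neighbour hypothesis is, as you yourself observe, just optimality at length $\ell+1$ (and $\ell+2$) in disguise, so it is already contained in the strong induction and there is nothing extra to propagate; the argument you sketch for $c_\ell=1$ does work (via $c_\ell=1\Rightarrow c_{\ell+1}=0$), but it is more convoluted than necessary. The paper closes the same gap in one line using the no-$11$ fact at the \emph{window's} left boundary rather than at $c_\ell$: since at least one of $c_i,c_{i+1}$ is $0$, either $\Sigma(i,j+1)=\Sigma(i+1,j)\le\Sigma(0,j)$ or $\Sigma(i+1,j+1)=\Sigma(i+2,j)\le\Sigma(0,j)$, and in both cases the sum is at most $\Sigma(0,j)+\Sigma(0,j+1)$. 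This avoids any auxiliary statement and any case split on $c_\ell$.
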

\begin{proof}
Since $0^n$ is always a factor of $\mathbf{c}$ for every $n\geq 1$, we have $m_{\mathbf{c}}(n)=0$ for every $n\geq 1$.

For every $i\geq 0$ and $n\geq 1$, let $\Sigma(i,n):=\sum_{j=i}^{i+n-1}c_j$. We only need to show that $M_{\mathbf{c}}(n)\leq\Sigma(0,n)$ for every $n\geq 1$, since the inverse inequality always holds by definition.  For this purpose, we shall prove that for every $n\geq 1$,
\begin{equation}\label{eq:sigma}
\Sigma(i,n)\leq \Sigma(0,n) \text{ for every integer } i\geq 0.
\end{equation}
Since `$1$' occurs in $\mathbf{c}$ and `$11$' does not occur in $\mathbf{c}$, we have $\Sigma(i,1)\leq 1=\Sigma(0,1)$ and $\Sigma(i,2)\leq 1=\Sigma(0,2)$. Now suppose \eqref{eq:sigma} holds for $n<m$. We first deal with the case: $m=3j+2$.
By \eqref{eq:rec}, we have the following nine recurrence relations:
\[\left\{
\begin{array}{ll}
\Sigma(3i,3n)  = 2\Sigma(i,n), & \Sigma(3i+1,3n+2)  = \Sigma(i,n+1)+\Sigma(i+1,n), \\
\Sigma(3i,3n+1)  = \Sigma(i,n)+\Sigma(i,n+1), & \Sigma(3i+2,3n)  = \Sigma(i,n)+\Sigma(i+1,n),\\
\Sigma(3i,3n+2)  = \Sigma(i,n)+\Sigma(i,n+1), & \Sigma(3i+2,3n+1)  = \Sigma(i,n+1)+\Sigma(i+1,n),\\
\Sigma(3i+1,3n)  = \Sigma(i,n)+\Sigma(i+1,n), & \Sigma(3i+2,3n+2)  = \Sigma(i,n+1)+\Sigma(i+1,n+1),\\
\Sigma(3i+1,3n+1)  = \Sigma(i,n)+\Sigma(i+1,n). &
\end{array}\right.
\]
By the above equations and the inductive assumption, for every $i\geq 0$,
\begin{align*}
\Sigma(3i,3j+2)  & = \Sigma(i,j)+\Sigma(i,j+1)\leq \Sigma(0,j)+\Sigma(0,j+1)=\Sigma(0,3j+2),\\
\Sigma(3i+1,3j+2) &  = \Sigma(i+1,j)+\Sigma(i,j+1)\leq \Sigma(0,j)+\Sigma(0,j+1)=\Sigma(0,3j+2).
\end{align*}
Note that at least one of $c_{i}$ and $c_{i+1}$ must be zero. So
\[\Sigma(3i+2,3j+2)  = \Sigma(i,j+1)+\Sigma(i+1,j+1)\leq \Sigma(0,j)+\Sigma(0,j+1)=\Sigma(0,3j+2).\]
Therefore, \eqref{eq:sigma} holds in the case $m=3j+2$. Following the same way, we can verify \eqref{eq:sigma} when $m=3j,\, 3j+1$.
\end{proof}

\begin{corollary}\label{max:min}
$M_{\mathbf{c}}(1)=1$, $M_{\mathbf{c}}(2)=1$ and for every $n\geq 1$,
\[M_{\mathbf{c}}(3n)=2M_{\mathbf{c}}(n) \text{ and } M_{\mathbf{c}}(3n+1)=M_{\mathbf{c}}(3n+2)=M_{\mathbf{c}}(n)+M_{\mathbf{c}}(n+1).\]
Moreover, $\{M_{\mathbf{c}}(n)\}_{n\geq 1}$ is a $3$-regular sequence.
\end{corollary}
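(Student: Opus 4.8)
The plan is to derive everything from Lemma \ref{lem:ab}, which identifies $M_{\mathbf{c}}(n)$ with the prefix sum $\Sigma(0,n)=\sum_{i=0}^{n-1}c_i$. The two initial values are then immediate: $M_{\mathbf{c}}(1)=c_0=1$ and $M_{\mathbf{c}}(2)=c_0+c_1=1$, using $c_1=0$ from \eqref{eq:rec}. For the three recurrences I would simply specialise to $i=0$ the nine relations established inside the proof of Lemma \ref{lem:ab}: the relations $\Sigma(0,3n)=2\Sigma(0,n)$, $\Sigma(0,3n+1)=\Sigma(0,n)+\Sigma(0,n+1)$ and $\Sigma(0,3n+2)=\Sigma(0,n)+\Sigma(0,n+1)$ translate verbatim into $M_{\mathbf{c}}(3n)=2M_{\mathbf{c}}(n)$ and $M_{\mathbf{c}}(3n+1)=M_{\mathbf{c}}(3n+2)=M_{\mathbf{c}}(n)+M_{\mathbf{c}}(n+1)$. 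So the recurrence part requires no new work.

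For the $3$-regularity I would first extend the sequence by setting $M_{\mathbf{c}}(0):=0$ (the empty sum) and check that the three recurrences remain valid at $n=0$, so that they hold for all $n\geq 0$; this is needed because the $3$-kernel is indexed from $n=0$. The strategy is then the standard one for proving $k$-regularity: exhibit a finitely generated $\mathbb{Z}$-module containing the whole $3$-kernel. Here I would take $V$ to be the $\mathbb{Z}$-module generated by the two sequences $f_0=(M_{\mathbf{c}}(n))_{n\geq 0}$ and $f_1=(M_{\mathbf{c}}(n+1))_{n\geq 0}$.

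The heart of the argument is to show that $V$ is stable under the three decimation operators $\Lambda_c\colon (g(n))_{n\geq 0}\mapsto (g(3n+c))_{n\geq 0}$ for $c\in\{0,1,2\}$. Using the recurrences, $\Lambda_0 f_0=2f_0$ and $\Lambda_1 f_0=\Lambda_2 f_0=f_0+f_1$, while $\Lambda_0 f_1$ and $\Lambda_1 f_1$ equal $(M_{\mathbf{c}}(3n+1))_n=(M_{\mathbf{c}}(3n+2))_n=f_0+f_1$. The only computation that needs a small index shift is
\[\Lambda_2 f_1=\bigl(M_{\mathbf{c}}(3n+3)\bigr)_{n\geq 0}=\bigl(M_{\mathbf{c}}(3(n+1))\bigr)_{n\geq 0}=\bigl(2M_{\mathbf{c}}(n+1)\bigr)_{n\geq 0}=2f_1,\]
which I expect to be the main (and essentially only) subtle point, since it applies the rule $M_{\mathbf{c}}(3m)=2M_{\mathbf{c}}(m)$ at $m=n+1$ rather than at $m=n$. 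Hence each $\Lambda_c$ maps the generators of $V$ into $V$, so $\Lambda_c(V)\subseteq V$.

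Finally, since $M_{\mathbf{c}}=f_0\in V$ and $V$ is stable under the $\Lambda_c$, every element of the $3$-kernel $\mathcal{K}_3(M_{\mathbf{c}})$, being obtained from $f_0$ by iterating the operators $\Lambda_c$, lies in $V$. As $\mathbb{Z}$ is Noetherian and $V$ is generated by the two elements $f_0,f_1$, the $\mathbb{Z}$-module generated by $\mathcal{K}_3(M_{\mathbf{c}})$ is a submodule of a finitely generated module, hence itself finitely generated. Therefore $\{M_{\mathbf{c}}(n)\}_{n\geq 1}$ is a $3$-regular sequence.
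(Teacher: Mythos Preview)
Your proof is correct and follows essentially the same approach as the paper: the paper states this result as a corollary with no explicit proof, so it is meant to follow directly from Lemma~\ref{lem:ab} (which gives $M_{\mathbf{c}}(n)=\Sigma(0,n)$) together with the nine $\Sigma$-recurrences established in its proof, exactly as you do. Your additional verification of $3$-regularity via the two-generator module $\langle f_0,f_1\rangle$ stable under the decimation operators is a standard and valid way to make the ``Moreover'' clause explicit, which the paper leaves to the reader.
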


\begin{proposition}\label{abel:1}
$\mathcal{P}^{(1)}_{\mathbf{c}}(1)=2$, $\mathcal{P}^{(1)}_{\mathbf{c}}(2)=2$ and for every $n\geq 1$,
\[\mathcal{P}^{(1)}_{\mathbf{c}}(3n)=2\mathcal{P}^{(1)}_{\mathbf{c}}(n)-1 \text{ and } \mathcal{P}^{(1)}_{\mathbf{c}}(3n+1)=\mathcal{P}^{(1)}_{\mathbf{c}}(3n+2)=\mathcal{P}^{(1)}_{\mathbf{c}}(n)+\mathcal{P}^{(1)}_{\mathbf{c}}(n+1)-1.\]
Moreover, $\{\mathcal{P}^{(1)}_{\mathbf{c}}(n)\}_{n\geq 1}$ is a $3$-regular sequence.
\end{proposition}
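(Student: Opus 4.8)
The plan is to reduce everything to the quantities already computed in Section~\ref{sec:2}. Combining the identity \eqref{eq:pm} with Lemma~\ref{lem:ab}, which gives $m_{\mathbf{c}}(n)=0$ for all $n\geq 1$, I immediately obtain the clean relation $\mathcal{P}^{(1)}_{\mathbf{c}}(n)=M_{\mathbf{c}}(n)+1$ for every $n\geq 1$. The stated initial values and recurrences then follow by substituting this relation into Corollary~\ref{max:min}: for instance $\mathcal{P}^{(1)}_{\mathbf{c}}(3n)=M_{\mathbf{c}}(3n)+1=2M_{\mathbf{c}}(n)+1=2(M_{\mathbf{c}}(n)+1)-1=2\mathcal{P}^{(1)}_{\mathbf{c}}(n)-1$, and similarly $\mathcal{P}^{(1)}_{\mathbf{c}}(3n+1)=\mathcal{P}^{(1)}_{\mathbf{c}}(3n+2)=\mathcal{P}^{(1)}_{\mathbf{c}}(n)+\mathcal{P}^{(1)}_{\mathbf{c}}(n+1)-1$, while the base cases $\mathcal{P}^{(1)}_{\mathbf{c}}(1)=\mathcal{P}^{(1)}_{\mathbf{c}}(2)=M_{\mathbf{c}}(1)+1=M_{\mathbf{c}}(2)+1=2$ are read off the same way. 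This part is routine algebra, so I would dispatch it in a couple of lines.

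For the $3$-regularity I would exhibit an explicit finitely generated $\mathbb{Z}$-module containing the whole $3$-kernel. Writing $P(n):=\mathcal{P}^{(1)}_{\mathbf{c}}(n)$ and extending by $P(0):=1$ (consistent with $M_{\mathbf{c}}(0)=0$ and with all the recurrences), let $V$ be the $\mathbb{Z}$-module generated by the three sequences $P(n)$, $P(n+1)$, and the constant sequence $\mathbf{1}(n)=1$. The point is that $V$ is stable under each of the three kernel maps $T_c\colon g(n)\mapsto g(3n+c)$, $c\in\{0,1,2\}$. Indeed, the recurrences give $T_0 P(n)=2P(n)-\mathbf{1}(n)$ and $T_1 P(n)=T_2 P(n)=P(n)+P(n+1)-\mathbf{1}(n)$, while for the shifted generator one computes $T_0 P(n+1)=T_1 P(n+1)=P(n)+P(n+1)-\mathbf{1}(n)$ and, using $P(3n+3)=P(3(n+1))=2P(n+1)-1$, also $T_2 P(n+1)=2P(n+1)-\mathbf{1}(n)$; finally $T_c\mathbf{1}=\mathbf{1}$. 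Since every element of the $3$-kernel of $P$ is obtained from $P$ by a finite composition of the maps $T_c$, an induction on the number of applications shows the $3$-kernel is contained in $V$. As $\mathbb{Z}$ is Noetherian, the submodule generated by the $3$-kernel is then finitely generated, which is exactly the assertion that $P=\mathcal{P}^{(1)}_{\mathbf{c}}$ is $3$-regular.

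The one place that needs care is the closure computation for the shifted sequence $P(n+1)$: the map $T_2$ sends it to $P(3n+3)$, which is not literally one of the listed recurrences and must first be rewritten as $P(3(n+1))$ before the rule for arguments divisible by $3$ applies. This bookkeeping of the shift is the only genuine step; everything else is substitution. Alternatively, one can bypass the module computation entirely by invoking that constant sequences are $3$-regular and that the class of $3$-regular sequences is closed under addition, so that $\mathcal{P}^{(1)}_{\mathbf{c}}=M_{\mathbf{c}}+1$ inherits $3$-regularity directly from Corollary~\ref{max:min}. I would nonetheless keep the explicit module argument, since the same mechanism, tracking a sequence together with its shift and a constant, will be reused for the general $k$-abelian complexity in the proof of Theorem~\ref{thm:A}.
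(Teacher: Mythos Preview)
Your proposal is correct and follows essentially the same approach as the paper: the paper's proof is the single line ``It follows from Lemma~\ref{lem:ab}, Corollary~\ref{max:min} and \eqref{eq:pm},'' and your argument simply spells out this deduction, deriving $\mathcal{P}^{(1)}_{\mathbf{c}}(n)=M_{\mathbf{c}}(n)+1$ and substituting. Your explicit module computation for $3$-regularity is additional (and correct) detail that the paper omits, but as you yourself note, it can be bypassed by the closure of $3$-regular sequences under addition of a constant, which is presumably what the paper has in mind.
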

\begin{proof}
It follows from Lemma \ref{lem:ab}, Corollary \ref{max:min} and  \eqref{eq:pm}.
\end{proof}

\section{From $k$-abelian equivalence to $1$-abelian equivalence}
In this section, we give a key theorem, which implies that under certain condition, $k$-abelian equivalence can be reduced to $1$-abelian equivalence.
Using this theorem, we deduce the regularity of the $k$-abelian complexity of $\mathbf{c}$ from that of the abelian complexity of $\mathbf{c}$. Before stating the result, we give two auxiliary lemmas. For $z,w \in \mathcal{A}^{*}$, we define
\[P(z,w):=
\begin{cases}
1, & \text{if }z\text{ is a prefix of }w,\\
0, & \text{otherwise,}
\end{cases}
\text{ and }
S(z,w):=
\begin{cases}
1, & \text{if } z \text{ is a suffix of }w,\\
0, & \text{otherwise.}
\end{cases}
\]
\begin{lemma}\label{pref:suff}
Let $\mathcal{\omega}\in \{0,1\}^{\mathbb{N}}$ and $u,z\in \mathcal{F}_{\mathcal{\omega}}$ with $|u|\geq |z|$. Suppose $z=ayb,$ where $a, b \in \{0,1\}$. We have
\[|u|_{z}=
\begin{cases}
 |u|_{ay}-S(ay ,u), & \text{if }~ay\notin RS_{\mathcal{\omega}},\\
 |u|_{yb}-P(yb,u), & \text{if }~yb\notin LS_{\mathcal{\omega}},\\
 |u|_{ay}-|u|_{ay(1-b)}-S(ay,u), & \text{if }~ay\in RS_{\mathcal{\omega}},\\
 |u|_{yb}-|u|_{(1-a)yb}-P(yb, u), & \text{if }~yb\in LS_{\mathcal{\omega}}.\\
\end{cases}
\]
\end{lemma}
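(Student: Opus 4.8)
The plan is to count occurrences of the length-$3$ factor $z = ayb$ by relating them to occurrences of its length-$2$ border factors $ay$ and $yb$. The key idea is that every occurrence of $ay$ inside $u$ is either followed by the letter $b$ (making it an occurrence of $z=ayb$), followed by the other letter $1-b$, or sits at the very end of $u$ with nothing following it. This gives the bookkeeping identity
\[
|u|_{ay} = |u|_{ayb} + |u|_{ay(1-b)} + S(ay,u),
\]
where $S(ay,u)$ accounts for the single occurrence of $ay$ as a suffix of $u$ (if it is one) that has no successor letter inside $u$. A symmetric identity holds for $yb$, splitting on the letter preceding each occurrence:
\[
|u|_{yb} = |u|_{ayb} + |u|_{(1-a)yb} + P(yb,u).
\]
These two identities are the whole engine of the lemma; everything else is extracting $|u|_z = |u|_{ayb}$ from them under the various special/non-special hypotheses.

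First I would establish the two displayed counting identities carefully, treating $u$ as a sequence of overlapping length-$2$ windows and observing that each occurrence of $ay$ starting at position $p$ in $u$ extends to a length-$3$ window $u_p u_{p+1} u_{p+2}$ precisely when $p \le |u|-3$, i.e.\ exactly when $ay$ is not the terminal length-$2$ factor. When it is terminal, $S(ay,u)=1$ captures the lost window, and in that case there is no following letter so the occurrence contributes to neither $ayb$ nor $ay(1-b)$. This justifies the first identity; the second is the left-to-right mirror image.

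Next I would feed in the special-factor hypotheses. If $ay \notin RS_{\omega}$, then $ay$ is not right special in $\omega$, so among the two possible right extensions $ayb$ and $ay(1-b)$ only one actually occurs in $\omega$, hence in $u$. Since $z=ayb$ is assumed to be a factor of $\omega$ (as $z \in \mathcal{F}_\omega$), the extension that occurs must be $ayb$, forcing $|u|_{ay(1-b)}=0$. Substituting into the first identity yields $|u|_z = |u|_{ay} - S(ay,u)$, the first case. When instead $ay \in RS_{\omega}$, no such vanishing occurs and the first identity gives directly $|u|_z = |u|_{ay} - |u|_{ay(1-b)} - S(ay,u)$, the third case. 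The second and fourth cases follow identically from the mirror identity, using $yb \notin LS_\omega$ (respectively $yb \in LS_\omega$) and the fact that $(1-a)yb$ then cannot occur.

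The main obstacle, and the only point requiring genuine care rather than routine splitting, is the edge effect at the end of the word $u$ — correctly arguing that the boundary correction is exactly $S(ay,u) \in \{0,1\}$ (and $P(yb,u)$ on the other side), never larger, because $ay$ can be a suffix of $u$ in at most one way. One should also verify the implicit consistency of the four cases: since $\{0,1\}$ is binary, $ay \notin RS_\omega$ is equivalent to ``only one of $ayb, ay(1-b)$ occurs,'' which is what makes the non-special cases collapse cleanly. I would keep the argument entirely combinatorial and avoid invoking the recurrence \eqref{eq:rec} here, since the lemma is stated for a general binary $\omega$ and uses only the abstract notions of right/left special factors.
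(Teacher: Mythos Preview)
Your approach is essentially identical to the paper's: both count the right-extendable occurrences of $ay$ in $u$ (those not sitting at the suffix), split them by the letter that follows, and then use the special/non-special hypothesis to decide whether the $ay(1-b)$ term vanishes. One cosmetic slip: you phrase everything as if $|z|=3$ and $|ay|=|yb|=2$, but in the statement $y$ is an arbitrary word (possibly empty or long); your bookkeeping identity $|u|_{ay}=|u|_{ayb}+|u|_{ay(1-b)}+S(ay,u)$ and its mirror hold verbatim for any length of $y$, so this does not affect correctness.
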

\begin{proof}
Note that $|u|_{ay}-S(ay ,u)$ is the number of occurrences of a right extendable $ay$ in $u$. When $ay$ is not right special, every right extension of a right extendable $ay$ must be $z$. So, $|u|_{ay}-S(ay ,u)=|u|_{z}$.
When $ay$ is right special, its right extensions are either $z$ or $ay(1-b)$. So, $|u|_{ay}-S(ay ,u)=|u|_{z}+|u|_{ay(1-b)}$.
The rest cases can be verified in the same way.
\end{proof}

\begin{lemma}\label{Delta:i}
For every $i\geq 0$, $u\in \mathcal{F}_{\mathbf{c}}$, let $\Delta_{i}:=|u|_{0^{3^i+2}}+|u|_{10^{3^i}1}-|u|_{0^{3^i+1}}+1$. Then $\Delta_{i}\in\{0,1,2\}$ and
\[\Delta_{i}=
\begin{cases}
|u|_{0^{3^{i}1}}+\frac{2}{3^i}|u|_{0^{3^i+1}} +1-P(0^{3^i}1,u) ~(\mathrm{mod}~ 3), & \text{if }~P(0^{3^i+1},u)=S(0^{3^i+1},u)=0, \\
|u|_{0^{3^i}1}+1-S(0^{3^i+1},u)-P(0^{3^i}1,u) ~(\mathrm{mod}~ 2), & \text{otherwise}.
\end{cases}\]
\end{lemma}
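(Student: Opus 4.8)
The plan is to first rewrite $\Delta_i$ so that its combinatorial meaning is transparent, and then read everything off from the run-length structure of $\mathbf{c}$. Lemma~\ref{lem:rs} (with $k=3^i+1$, which satisfies $3^i<k\le 3^{i+1}$) shows that $0^{3^i+1}$ is right special. Writing $0^{3^i+2}=0\cdot 0^{3^i}\cdot 0$ and applying the third case of Lemma~\ref{pref:suff} with $ay=0^{3^i+1}$ gives
\[
|u|_{0^{3^i+2}}=|u|_{0^{3^i+1}}-|u|_{0^{3^i+1}1}-S(0^{3^i+1},u).
\]
Substituting this into the definition collapses the $0$-power terms and leaves
\[
\Delta_i=|u|_{10^{3^i}1}-|u|_{0^{3^i+1}1}-S(0^{3^i+1},u)+1 .
\]

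Next I record the structural facts about $\mathbf{c}$. Since $\mathbf{c}$ is the fixed point of $\sigma:1\mapsto101,\ 0\mapsto000$ coming from \eqref{eq:rec} and $\mathbf{c}=\sigma^{i}(\mathbf{c})$, every maximal run of $0$'s has length a power of $3$, and, listing from left to right only the maximal runs of length $\ge 3^i$, they alternate between runs of length exactly $3^i$ (type $M$) and runs of length $\ge 3^{i+1}$ (type $B$); equivalently, between two consecutive $B$-runs there is exactly one $M$-run. This lets me reinterpret the two middle terms: $|u|_{10^{3^i}1}$ counts the $M$-runs of $u$ flanked by $1$'s inside $u$ (call their number $M_u$), while $|u|_{0^{3^i+1}1}+S(0^{3^i+1},u)$ counts the observed $0$-runs of $u$ of observed length $\ge 3^i+1$ (call their number $B_u$)---a run reaches observed length $\ge 3^i+1$ exactly when it is a possibly truncated $B$-run, accounted for either by a following $1$ or, if it ends $u$, by $S$. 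Thus $\Delta_i=M_u-B_u+1$.

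To prove $\Delta_i\in\{0,1,2\}$, i.e.\ $M_u-B_u\in\{-1,0,1\}$, I consider the alternating pattern $\pi=\pi_1\cdots\pi_s$ of all length-$\ge 3^i$ runs meeting $u$. Every such run except the (at most two) extreme ones is interior and counted in full; the extreme runs can only be \emph{lost} when truncated (a truncated $M$ is no longer flanked and drops from $M_u$; a $B$ truncated to length $\le 3^i$ drops from $B_u$), and such a loss can occur only at an endpoint of $\pi$. Since $\#\{M\text{ in }\pi\}-\#\{B\text{ in }\pi\}$ equals $+1,0,-1$ according as $\pi$ begins and ends with $M$, has mixed endpoints, or begins and ends with $B$, and the admissible truncations are exactly aligned with these endpoint types, a four-way case check yields $M_u-B_u\in\{-1,0,1\}$ in every case.

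Finally I derive the two congruences by expressing $M_u$ and $B_u$ through the factor counts on the right-hand sides, tracking boundary contributions via $P$ and $S$. Counting the $1$'s of $u$ preceded by at least $3^i$ zeros gives (for $u$ containing a $1$) the identities $|u|_{0^{3^i}1}=M_u+\#\{\text{interior }B\}+P(0^{3^i}1,u)+P(0^{3^i+1},u)$ and $B_u=\#\{\text{interior }B\}+P(0^{3^i+1},u)+S(0^{3^i+1},u)$; the degenerate case $u\in0^{*}$ is checked directly. When $P(0^{3^i+1},u)=S(0^{3^i+1},u)=0$ there are no long boundary runs, so $|u|_{0^{3^i+1}}=\sum_{\text{interior }B}(3^{j}-3^i)$ is divisible by $3^i$, and $j\ge i+1$ forces $3^{j-i}\equiv0\pmod3$, whence $\frac{2}{3^i}|u|_{0^{3^i+1}}\equiv B_u\pmod3$; combined with the exact identity $M_u=|u|_{0^{3^i}1}-B_u-P(0^{3^i}1,u)$ and $\Delta_i=M_u-B_u+1$, the relation $-2B_u\equiv B_u$ produces the first (mod $3$) formula. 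Otherwise a boundary $B$-run makes $|u|_{0^{3^i+1}}$ not a multiple of $3^i$, so the mod-$3$ bookkeeping fails; instead I reduce $\Delta_i=M_u-B_u+1$ modulo $2$, where the two copies of $\#\{\text{interior }B\}$ and of $P(0^{3^i+1},u)$ cancel, leaving exactly $|u|_{0^{3^i}1}+1-S(0^{3^i+1},u)-P(0^{3^i}1,u)$. The main obstacle throughout is the boundary bookkeeping: getting the endpoint/truncation cases for $\pi$ right is what simultaneously secures the bound $\Delta_i\in\{0,1,2\}$ and the precise placement of the $P$- and $S$-terms in the two formulas.
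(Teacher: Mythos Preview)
Your approach is essentially the paper's: both rewrite $\Delta_i$ as $|u|_{10^{3^i}1}-Z(3^i+1)+1$ (your $M_u-B_u+1$; the paper's $Z(3^i+1)$ is exactly your $B_u$), both use the alternating $M/B$ run structure to get $\Delta_i\in\{0,1,2\}$, and both reach the identity $\Delta_i=|u|_{0^{3^i}1}-2|u|_{0^{3^i+1}1}+1-S(0^{3^i+1},u)-P(0^{3^i}1,u)$ (the paper via two applications of Lemma~\ref{pref:suff}, you via the run-count identities). The mod~$3$ reduction in the first case is identical.

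There is one genuine omission. In the ``otherwise'' case you reduce modulo $2$, but you never establish that $\Delta_i\in\{0,1\}$ there; without this, the congruence does not determine $\Delta_i$ (and the lemma is used in Theorem~\ref{thm:B} precisely to pin $\Delta_i$ down from $k$-abelian data). The paper proves this refinement explicitly: if $P(0^{3^i+1},u)=1$ or $S(0^{3^i+1},u)=1$, at least one long block of zeros sits at a boundary rather than between two $10^{3^i}1$'s, forcing $Z(3^i+1)\ge |u|_{10^{3^i}1}$, hence $\Delta_i\le 1$. In your language this is immediate from the alternating pattern $\pi$: a long boundary prefix (resp.\ suffix) means $\pi$ begins (resp.\ ends) with a $B$ that \emph{is} counted in $B_u$, and then the alternation gives $M_u\le B_u$. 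You should add this one sentence to close the argument.
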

\begin{proof}
Let $Z(\ell)$ $(\ell\geq 1)$ be the number of blocks of zeros (in $u$) of length not less than $\ell$. For example, when $u=0010100$, then $Z(1)=3$ and $Z(2)=2$. Note that, for every $\ell\geq 3^{i}+1$, $|0^{\ell}|_{0^{3^i+1}}-|0^{\ell}|_{0^{3^i+2}}=1$. So,
\begin{align*}
|u|_{0^{3^i+1}}-|u|_{0^{3^i+2}} &= \sum_{v\text{ is a block of zeros in }u}(|v|_{0^{3^i+1}}-|v|_{0^{3^i+2}})\\
& = \sum_{\substack{v\text{ is a block of zeros in }u\\ |v|\geq 3^{i}+1}}1=Z(3^{i}+1).
\end{align*}
 On the other hand, $10^{3^{i}}1$ only occurs in $\sigma^{i+1}(1)$. Thus, there is a block of zeros of length $3^{i+\ell}$ (for some $\ell\geq 1$) between two consecutive $10^{3^{i}}1$. Since the block of zeros could also be the prefix or suffix of $u$, we have $|u|_{10^{3^{i}}1}-1\leq Z(3^{i}+1)\leq |u|_{10^{3^{i}}1}+1$, which implies $\Delta_{i}\in\{0,1,2\}$.

When $P(0^{3^i+1},u)=1$ or $S(0^{3^i+1},u)=1$, there is at least one block of zeros of length not less than $3^{i}+1,$ which is not located between two consecutive $10^{3^{i}}1$. This implies that $|u|_{10^{3^{i}}1}\leq Z(3^{i}+1)\leq |u|_{10^{3^{i}}1}+1$. So, in this case, $\Delta_{i}\in\{0,1\}$.
Applying Lemma \ref{pref:suff} to $|u|_{0^{3^i+2}}$ and $|u|_{10^{3^i}1}$, we have
\begin{equation}\label{eq:delta1}
\Delta_{i}=|u|_{0^{3^i}1}-2|u|_{0^{3^i+1}1}+1-S(0^{3^i+1},u)-P(0^{3^i}1,u).
\end{equation}
Since $\Delta_{i}\in\{0,1\}$, by \eqref{eq:delta1}, $\Delta_{i}=|u|_{0^{3^i}1}+1-S(0^{3^i+1},u)-P(0^{3^i}1,u)~(\mathrm{mod}~2).$

Now, suppose $P(0^{3^i+1},u)=S(0^{3^i+1},u)=0$. Applying Lemma \ref{pref:suff} to $|u|_{0^{3^i+1}1}$, by \eqref{eq:delta1}, we have
\begin{equation}\label{eq:delta2}
\Delta_{i}=|u|_{0^{3^i}1}-2Z(3^i+1)+1-P(0^{3^i}1,u).
\end{equation}
Let $\sum_{v}$ denote the sum over all blocks of zeros $v$ of $u$ of length not less than $3^{i}+1$. Then
\begin{align*}
  |u|_{0^{3^{i} + 1}} & = \sum\nolimits_{v}|v|_{0^{3^i + 1}}
  = \sum\nolimits_{v}{(|v| - {3^i})}
  = \left(\sum\nolimits_{v}|v|\right) - {3^i}Z(3^{i}+1)
\end{align*}
Note that, in this case, all blocks of zeros of $u$ are of length $3^{i+\ell}$ for some $\ell\geq 1$. So,
\begin{equation}\label{eq:delta3}
-2Z(3^{i}+1)\equiv \frac{2}{3^{i}}|u|_{0^{3^{i} + 1}}~(\mathrm{mod}~3).
\end{equation}
The result of this case follows from \eqref{eq:delta3} and \eqref{eq:delta2}.
\end{proof}

Now, we prove Theorem \ref{thm:B}.
\begin{proof}[Proof of Theorem \ref{thm:B}]
{ Let $u,v\in\mathcal{F}_{\mathbf{c}}$ satisfying $|u|=|v|$, $\mathrm{pref}_{k}(u)=\mathrm{pref}_{k}(v)$ and $\mathrm{suff}_{k}(u)=\mathrm{suff}_{k}(v)$.} When $k\geq |u|$, the assumption gives $u=v$. In this case, the result is trivial. In the following, we alway assume that $k<|u|$.

The `only if' part follows directly from the definition of $k$-abelian equivalence. For the `if' part, we only need to show that $u\sim_{k}v$ implies that for every $z\in \mathcal{F}_{\mathbf{c}}(k+1)$, $|u|_{z}=|v|_{z}$. For this purpose, we separate $\mathcal{F}_{\mathbf{c}}(k+1)$ into two disjoint parts, i.e., $\mathcal{F}_{\mathbf{c}}(k+1)=E_{1}\cup E_{2}$, where
\begin{align*}
E_{1} & = \{z\in\mathcal{F}_{\mathbf{c}}(k+1)~|~ \mathrm{pref}_{k}(z)\notin\mathcal{RS}_{\mathbf{c}}(k) \text{ or } \mathrm{suff}_{k}(z)\notin\mathcal{LS}_{\mathbf{c}}(k)\},\\
E_{2} & = \{z\in\mathcal{F}_{\mathbf{c}}(k+1)~|~ \mathrm{pref}_{k}(z)\in\mathcal{RS}_{\mathbf{c}}(k) \text{ and } \mathrm{suff}_{k}(z)\in\mathcal{LS}_{\mathbf{c}}(k)\}.
\end{align*}
Suppose $z\in E_{1}$. If $\mathrm{pref}_{k}(z)\notin\mathcal{RS}_{\mathbf{c}}(k)$, then by Lemma \ref{pref:suff},
\[|u|_{z}=|u|_{\mathrm{pref}_{k}(z)}-S(\mathrm{pref}_{k}(z),u)=|v|_{\mathrm{pref}_{k}(z)}-S(\mathrm{pref}_{k}(z),v)=|v|_{z}.\]
If $\mathrm{suff}_{k}(z)\notin\mathcal{LS}_{\mathbf{c}}(k)$, then by Lemma \ref{pref:suff},
\[|u|_{z}=|u|_{\mathrm{suff}_{k}(z)}-P(\mathrm{suff}_{k}(z),u)=|v|_{\mathrm{suff}_{k}(z)}-P(\mathrm{suff}_{k}(z),v)=|v|_{z}.\]
So, for every $z\in E_{1}$, $|u|_{z}=|v|_{z}$.

Now, let $z\in E_{2}$. Suppose $3^{i}<k\leq 3^{i+1}$ for some $i\geq 0$. When $k\neq 3^{i}+1$, by Lemma \ref{lem:rs}, $E_{2}=\{0^{k+1}\}$.
By Lemma \ref{pref:suff} and the assumptions of this result,
\begin{align*}
|u|_{0^{k+1}} &= |u|_{0^{k}}-|u|_{0^{k}1}-S(0^{k},u)\\
&= |u|_{0^{k}}-\big(|u|_{0^{k-1}1}-P(0^{k-1}1,u)\big)-S(0^{k},u)\\
&= |v|_{0^{k}}-\big(|v|_{0^{k-1}1}-P(0^{k-1}1,v)\big)-S(0^{k},v) = |v|_{0^{k+1}}.
\end{align*}
When $k=3^{i}+1$, by Lemma \ref{lem:rs}, $E_{2}=\{0^{k+1}, 0^{k}1, 10^{k}, 10^{k-1}1\}$.
{ For every $w\in\mathcal{F}_{\mathbf{c}}$,
by Lemma \ref{pref:suff} and \ref{Delta:i}, we have the following linear system:
\begin{equation}\label{eq:3eq}
\left\{
\begin{aligned}
& |w|_{0^{k+1}}+|w|_{0^{k}1}  = |w|_{0^{k}}-S(0^{k},w),\\
& |w|_{0^{k+1}}+|w|_{10^{k}}  = |w|_{0^{k}}-P(0^{k},w),\\
& |w|_{10^{k}} +|w|_{10^{k-1}1}  = |w|_{10^{k-1}}-S(10^{k-1},w),\\
& |w|_{0^{k+1}}+|w|_{10^{k-1}1}  = |w|_{0^{k}}-1+\Delta_{i},
\end{aligned}
\right.
\end{equation}
which determines $(|w|_{z})_{z\in E_{2}}$ uniquely.  If $u\sim_{k}v$, then the linear systems \eqref{eq:3eq} for $u$ and $v$ turn out to be the same one. So, $u\sim_{k}v$ implies $|u|_{z}=|v|_{z}$ for every factor  $z\in E_{2}$. }
\end{proof}

We may now apply Theorem \ref{thm:B} repeatedly to reduce the $k$-abelian equivalence to the $1$-abelian equivalence under the condition of Theorem \ref{thm:B}.
\begin{corollary}\label{abel:k}
Let $k\geq 1$ and $u,v\in \mathcal{F}_{\mathbf{c}}$ satisfying $|u|=|v|$. If $\mathrm{pref}_{k}(u)=\mathrm{pref}_{k}(v)$ and $\mathrm{suff}_{k}(u)=\mathrm{suff}_{k}(v)$, then $u\sim_{k+1} v ~\text{if and only if}~u\sim_{1} v$.
\end{corollary}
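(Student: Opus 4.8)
The plan is to obtain the corollary by iterating Theorem \ref{thm:B}. The single-step content I extract from Theorem \ref{thm:B} is that whenever $u$ and $v$ share their length-$j$ prefix and their length-$j$ suffix, the $(j+1)$-abelian equivalence of $u$ and $v$ is equivalent to their $j$-abelian equivalence (indeed, as the theorem states, already to their $1$-abelian equivalence). The corollary then follows by telescoping this reduction from order $k+1$ down to order $1$, mirroring the phrase ``apply Theorem \ref{thm:B} repeatedly'' in the text.

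The one observation that makes the iteration legitimate is that agreement of prefixes (resp.\ suffixes) is downward closed in the length: if $\mathrm{pref}_{k}(u)=\mathrm{pref}_{k}(v)$, then $\mathrm{pref}_{j}(u)=\mathrm{pref}_{j}(v)$ for every $1\leq j\leq k$, and likewise $\mathrm{suff}_{j}(u)=\mathrm{suff}_{j}(v)$ for every $1\leq j\leq k$. Thus the hypotheses of Theorem \ref{thm:B} are available at \emph{every} index $j\in\{1,\dots,k\}$, not merely at $j=k$, which is precisely what the repeated application requires.

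With this in hand, the ``only if'' direction is immediate: $u\sim_{k+1}v$ refines $u\sim_{1}v$ by the very definition of $k$-abelian equivalence, so $u\sim_{k+1}v$ implies $u\sim_{1}v$. For the ``if'' direction I would assume $u\sim_{1}v$ and prove $u\sim_{j}v$ for all $1\leq j\leq k+1$ by induction on $j$. The base case $j=1$ is the hypothesis. For the inductive step, suppose $u\sim_{j}v$ for some $1\leq j\leq k$; since $\mathrm{pref}_{j}(u)=\mathrm{pref}_{j}(v)$ and $\mathrm{suff}_{j}(u)=\mathrm{suff}_{j}(v)$ by the downward-closure observation, Theorem \ref{thm:B} (applied with $j$ in place of $k$) upgrades $u\sim_{j}v$ to $u\sim_{j+1}v$. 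Iterating up to $j=k$ yields $u\sim_{k+1}v$, as required.

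I expect essentially no obstacle here: all of the combinatorial work---the reduction of the count of length-$(k+1)$ factors to counts of shorter factors via the special-factor structure of $\mathbf{c}$ (Lemmas \ref{lem:rs}, \ref{pref:suff}, \ref{Delta:i})---has already been absorbed into Theorem \ref{thm:B}. The only point needing (minimal) care is to confirm that the prefix/suffix hypotheses of Theorem \ref{thm:B} persist at every intermediate order $j$, which is exactly the downward-closure remark; the degenerate case $k\geq|u|$, where the hypotheses force $u=v$, is already covered inside Theorem \ref{thm:B} and needs no separate treatment.
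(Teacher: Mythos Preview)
Your proposal is correct and matches the paper's approach exactly: the paper offers no explicit proof beyond the sentence ``we may now apply Theorem \ref{thm:B} repeatedly,'' which is precisely your inductive telescoping together with the downward-closure observation on prefixes and suffixes. Your remark that the proof of Theorem \ref{thm:B} in fact establishes only the single step $u\sim_{k}v\Rightarrow u\sim_{k+1}v$ (so that the stated conclusion $\sim_1\Leftrightarrow\sim_{k+1}$ already implicitly relies on the iteration you spell out) is also on point---indeed, the statements of Theorem \ref{thm:B} and Corollary \ref{abel:k} as written are identical.
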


\begin{remark}
A similar result for Sturmian words is obtained by Karhum\"{a}ki, Saarela and Zamboni \cite[Corollary 3.1]{KSZ13}. We would like to ask that in general, what kind of infinite words share a property similar to Corollay \ref{abel:k}?
\end{remark}

\section{$k$-abelian complexity}
In this section, we first give the regularity of the $2$-abelian complexity of $\mathbf{c}$. Then, by using Theorem \ref{thm:B} properly, we deduce the regularity of the $k$-abelian complexity of $\mathbf{c}$. We start by classifying the $k$-abelian equivalent classes of $\mathcal{F}_\mathbf{c}(n)$ by their prefixes and suffixes of length  $k-1$. 

For every $k\geq 2$, $x,y\in\mathcal{F}_{\mathbf{c}}(k-1)$ and every $n\geq 1$, let
\[p_{k}(n,x,y):=\textrm{Card}\left(\mathcal{W}_{n,x,y}/\sim_{k}\right),\]
where \[\mathcal{W}_{n,x,y}:=\left\{w\in\mathcal{F}_{\mathbf{c}}(n) \mid \mathrm{pref}_{k-1}(w)=x,\, \mathrm{suff}_{k-1}(w)=y\right\}.\]
Here $p_{k}(n,x,y)$ denotes the number of $k$-abelian equivalent classes with the prefix $x$ and the suffix $y.$ Then, for every $n\geq 1$, 
\begin{equation}\label{eqn:decomp}
\mathcal{P}^{(k)}_{\mathbf{c}}(n)=\sum_{x,y\in\mathcal{F}_{\mathbf{c}}(k-1)}p_{k}(n,x,y).
\end{equation}
By Theorem \ref{thm:B},
\begin{align}
\nonumber p_{k}(n,x,y) & = \mathrm{Card}\left(\mathcal{W}_{n,x,y}/\sim_{k}\right)\\
& = \mathrm{Card}\left(\mathcal{W}_{n,x,y}/\sim_{1}\right)=\mathrm{Card}\left(\{|w|_{1} \mid w\in\mathcal{W}_{n,x,y}\}\right).
\end{align}
\subsection{Regularity of {the} $2$-abelian complexity of $\mathbf{c}$}
Recall that the Cantor sequence $\mathbf{c}$ is the fixed point of the morphism $\sigma: 0\mapsto 000, 1\mapsto 101$ starting by $1,$ i.e., $\mathbf{c}=\sigma^{\infty}(1).$
\begin{lemma}
For all $i,j\geq 1$, let $d_{j}$ be the number of `$0$' between the $j$-th `$1$' and the $(j+1)$-th `$1$' in $\mathbf{c}$, and let $f(i,j)=j+\sum_{\ell=i}^{i+j-1}d_{\ell}$. Then, for every $j\geq 1$,
\begin{equation}\label{eq:f1}
d_{2j-1}=1 \text{ and } d_{2j}=3d_{j}.
\end{equation}
Moreover, for all $i,j\geq 1$,
\begin{equation}\label{eq:f2}
\left\{
\begin{aligned}
& f(2i,2j)  = 3f(i,j), &  &f(2i,2j+1)  = 3f(i,j+1)-2,\\
& f(2i+1,2j) = 3f(i+1,j), & &f(2i+1,2j+1)  = 3f(i+1,j)+2.
\end{aligned}
\right.
\end{equation}
\end{lemma}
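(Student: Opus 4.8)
The plan is to establish \eqref{eq:f1} first, reading it off from the self-similar structure of $\mathbf{c}$, and then to obtain the four identities in \eqref{eq:f2} by a direct arithmetic manipulation in which each relevant sum of gap-lengths is split according to the parity of its index.

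First I would write $\mathbf{c}$ in terms of its runs. Since the letter $1$ occurs infinitely often and $11$ never occurs, we may factor $\mathbf{c}=1\,0^{d_1}\,1\,0^{d_2}\,1\,0^{d_3}\cdots$, where $0^{d_j}$ is exactly the maximal block of zeros between the $j$-th and the $(j+1)$-th occurrence of $1$. Applying $\sigma$ and using $\sigma(1)=101$, $\sigma(0)=000$ turns each $1$ into $101$ and each block $0^{d_j}$ into $0^{3d_j}$, so
\[\sigma(\mathbf{c})=101\,0^{3d_1}\,101\,0^{3d_2}\,101\,0^{3d_3}\cdots.\]
Since $\mathbf{c}=\sigma(\mathbf{c})$, I can read off the successive gaps of $\mathbf{c}$ from the right-hand side: within each copy of $101=1\,0^{1}\,1$ the gap is $1$, while between two consecutive copies the gap is $3d_j$. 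Hence the gaps, in order, are $1,3d_1,1,3d_2,1,3d_3,\dots$, which is precisely $d_{2j-1}=1$ and $d_{2j}=3d_j$ for all $j\ge 1$. This proves \eqref{eq:f1}.

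For \eqref{eq:f2} I would set $D_{i,j}:=\sum_{\ell=i}^{i+j-1}d_\ell$, so that $f(i,j)=j+D_{i,j}$, and reduce each case to a statement about $D_{i,j}$. The key observation is that over any range of indices the odd-indexed gaps contribute $1$ each, while the even-indexed gaps $d_{2m}=3d_m$ reassemble, after the substitution $\ell=2m$, into $3$ times a sum of the form $D_{\cdot,\cdot}$. For instance, the $2j$ consecutive indices $2i,2i+1,\dots,2i+2j-1$ split into the $j$ even indices $2i,2(i+1),\dots,2(i+j-1)$ and the $j$ odd indices, giving $D_{2i,2j}=3D_{i,j}+j$ and hence $f(2i,2j)=2j+3D_{i,j}+j=3\bigl(j+D_{i,j}\bigr)=3f(i,j)$. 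The remaining three cases follow the same template: one determines whether the block of even indices begins at $2i$ or at $2(i+1)$ and counts the odd indices, obtaining $D_{2i,2j+1}=3D_{i,j+1}+j$, $D_{2i+1,2j}=3D_{i+1,j}+j$ and $D_{2i+1,2j+1}=3D_{i+1,j}+(j+1)$; substituting these into $f(a,b)=b+D_{a,b}$ and collecting the constant terms yields exactly the four right-hand sides in \eqref{eq:f2}.

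The computations themselves are elementary; the only delicate point, and the one I expect to be the main source of error, is the index bookkeeping across the four parity cases. An off-by-one slip in the starting index ($i$ versus $i+1$) of the even block, or in the count of odd indices in the range, would corrupt precisely the additive constant ($0$, $-2$, $0$, $+2$) that distinguishes the four identities. I would therefore track, in each case, three quantities: the value $b$ of the second argument (which contributes the summand $b$ via $f(a,b)=b+D_{a,b}$), which indices $m$ appear in the reassembled sum $D_{\cdot,\cdot}$, and the exact number of odd indices in the range. Once these are pinned down, each identity is immediate.
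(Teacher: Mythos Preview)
Your proposal is correct and follows essentially the same approach as the paper: both derive \eqref{eq:f1} from the fixed-point relation $\sigma(\mathbf{c})=\mathbf{c}$ by tracking how $\sigma$ transforms the run decomposition, and both obtain \eqref{eq:f2} by splitting the sum $\sum_{\ell}d_\ell$ according to the parity of $\ell$ and applying \eqref{eq:f1}. The paper only spells out the case $f(2i,2j)$ and declares the others analogous, whereas you sketch all four; your bookkeeping of the even block and the odd-index count is accurate in each case.
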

\begin{proof}
While applying $\sigma$ to `$1$' or a block of `$0$'s, we obtain only one block of `$0$'s in both cases. Note that in $\mathbf{c}$, every `$1$' is followed by a block of `$0$'s. Before the $i$-th `$1$', the number of occurrences of `$1$' is $(i-1)$ and there are $(i-1)$ blocks of `$0$'s in $\mathbf{c}$. So, while applying $\sigma$ to $\mathbf{c}$, the $i$-th `$1$' will generate the $(2i-1)$-th block of `$0$'s, which implies $d_{2i-1}=1$. For the same reason, the $i$-th block of `$0$'s will generate the $2i$-th block of `$0$'s. So, $d_{2i}=3d_{i}.$  This proves \eqref{eq:f1}. 

The recurrence relations \eqref{eq:f2} follows directly from \eqref{eq:f1}. We verify the first one as an example:
\[f(2i,2j)=2j+\sum_{\ell=2i}^{2i+2j-1}d_{\ell}=2j+\sum_{\ell=i}^{i+j-1}(d_{2\ell}+d_{2\ell+1})=3j+3\sum_{\ell=i}^{i+j-1}d_{\ell}=3f(i,j).\]
\end{proof}

\begin{proposition}\label{lem:ab2}
$p_{2}(1,0,0)=p_{2}(1,1,1)=1$, $p_{2}(1,0,1)=p_{2}(1,1,0)=0$ and for every $n\geq 2$,
\begin{subnumcases}{}
p_{2}(n,0,0)   = M_{\mathbf{c}}(n-2)+1, \label{eq:2a}\\
p_{2}(n,1,0)   = p_{2}(n,0,1) = M_{\mathbf{c}}(n-1), \label{eq:2b}\\
p_{2}(n,1,1)   = \begin{cases} 0, & \textrm{ if } n\equiv 0 \mod 2, \\
1, & \textrm{ if } n\equiv 1 \mod 2. \end{cases} \label{eq:2c}
\end{subnumcases}
\end{proposition}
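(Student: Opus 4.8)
The reduction is essentially already carried out in the discussion preceding \eqref{eqn:decomp}: since every $w\in\mathcal{W}_{n,x,y}$ has $\mathrm{pref}_{1}(w)=x$ and $\mathrm{suff}_{1}(w)=y$, Theorem~\ref{thm:B} applied with $k=1$ gives $w\sim_{2}w'\iff w\sim_{1}w'$ for $w,w'\in\mathcal{W}_{n,x,y}$, so that $p_{2}(n,x,y)=\mathrm{Card}\{|w|_{1}\mid w\in\mathcal{W}_{n,x,y}\}$. Thus the task is to count the distinct window digit sums $\Sigma(i,n)$ subject to $c_{i}=x$ and $c_{i+n-1}=y$. The four length-one values are read off directly, and $p_{2}(n,1,0)=p_{2}(n,0,1)$ follows because $\mathcal{F}_{\mathbf{c}}$ is closed under reversal (as used in the proof of Lemma~\ref{lem:rs}), which maps $\mathcal{W}_{n,1,0}$ bijectively onto $\mathcal{W}_{n,0,1}$ while preserving $|\cdot|_{1}$.

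For the pairs $(0,0)$ and $(1,0)$ I would first peel off the forced boundary letters: if $w\in\mathcal{W}_{n,0,0}$ begins at position $i$ then $\Sigma(i,n)=\Sigma(i+1,n-2)$, while if $w\in\mathcal{W}_{n,1,0}$ then $\Sigma(i,n)=\Sigma(i,n-1)$. In each case the plan is to show that the set of attained sums is a block of consecutive integers and then to locate its two endpoints. The minimum is immediate: $0^{n}$ and $10^{n-1}$ are factors of $\mathbf{c}$, giving the value $0$ in the $(0,0)$ case and $1$ in the $(1,0)$ case. For the maximum, the bounds $\Sigma(i+1,n-2)\le M_{\mathbf{c}}(n-2)$ and $\Sigma(i,n-1)\le M_{\mathbf{c}}(n-1)$ are trivial, and by Lemma~\ref{lem:ab} these maxima are realised by suitable prefixes of $\mathbf{c}$. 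The work is to produce an occurrence of a digit-sum-maximal factor that in addition carries the prescribed boundary letters; here I would use the self-similar decomposition $\sigma^{k+1}(1)=\sigma^{k}(1)\,0^{3^{k}}\,\sigma^{k}(1)$, which embeds a maximal block $\sigma^{k}(1)$ flanked by long runs of zeros, to relocate the extremal window so that it begins with the required letter and is followed (resp.\ preceded) by a $0$. Combining the endpoints with the interval property then yields $p_{2}(n,0,0)=M_{\mathbf{c}}(n-2)+1$ and $p_{2}(n,1,0)=M_{\mathbf{c}}(n-1)$.

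The pair $(1,1)$ is governed by the gap recurrences \eqref{eq:f1}. A factor that begins and ends with $1$ and contains exactly $m$ ones, running from the $i$-th to the $(i+m-1)$-th occurrence of $1$, has length $f(i,m-1)+1$. Since \eqref{eq:f1} gives $d_{2t-1}=1$ and $d_{2t}=3d_{t}$, every $d_{\ell}$ is a power of $3$ and hence odd; as $f(i,j)=j+\sum_{\ell=i}^{i+j-1}d_{\ell}$ is a sum of $2j$ odd contributions, $f(i,j)$ is always even. Therefore such factors have odd length, which already forces $p_{2}(n,1,1)=0$ for even $n$. For odd $n\ge 3$ it remains to prove that every even value $V=n-1\ge 2$ is attained as $f(i,j)$ and that all representations share the same $j$; this makes the number of ones a function of the length alone and gives $p_{2}(n,1,1)=1$. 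I would prove this existence-and-uniqueness statement by strong induction on $V$ via \eqref{eq:f2}: the residue of $V$ modulo $3$ reads off the parity of $j$ (the cases $2j$ versus $2j+1$ in \eqref{eq:f2}), and peeling one level replaces $V$ by $V/3$ or $(V\pm 2)/3$, the base case $j=1$ being handled by $f(i,1)=1+d_{i}$.

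The main obstacle is the maximum-with-boundary step of the second paragraph: showing that $M_{\mathbf{c}}(n-2)$ and $M_{\mathbf{c}}(n-1)$ are still attained after the endpoint letters are imposed, and that no intermediate value is skipped once the admissibility constraints $c_{i}=x$, $c_{i+n-1}=y$ are enforced. The naive sliding-window argument changes the digit sum by at most $1$ but need not pass through admissible windows, so the discrete intermediate-value step must be carried out inside the constrained family. A secondary delicate point is the $(1,1)$ induction, specifically ruling out the coexistence of a base-case solution $j=1$ and a larger odd $j$ for the same $V$; it is precisely the power-of-$3$ description of the gaps that makes this bookkeeping close.
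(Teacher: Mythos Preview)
Your outline matches the paper's proof almost exactly: same reduction via Theorem~\ref{thm:B} to counting $|w|_{1}$ over $\mathcal{W}_{n,x,y}$, same use of reversal symmetry for $p_{2}(n,1,0)=p_{2}(n,0,1)$, same parity argument for $(1,1)$ via the fact that every gap $d_{\ell}$ is an odd power of $3$, and the same induction through \eqref{eq:f2} for the uniqueness of $j$ when $n$ is odd.

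The one place where you leave a genuine gap is precisely the step you flag as the ``main obstacle'': the constrained intermediate-value argument for $(0,0)$ and $(1,0)$. Your plan separates ``get the max with the right boundary letters'' from ``fill in all intermediate values'', and you note that the naive slide does not stay inside the admissible family. The paper dissolves this difficulty rather than confronting it: instead of sliding a window along $\mathbf{c}$, it slides inside a single long block by setting
\[
W_{\ell}=0^{\,n-\ell}\,\mathrm{pref}_{\ell}\bigl(\sigma^{i}(1)\bigr),\qquad 0\le \ell\le n-1,
\]
which is automatically a factor of $\sigma^{i}(01)$ and always begins with $0$. Then $|W_{\ell}|_{1}$ moves from $0$ to $M_{\mathbf{c}}(n-2)$ in steps of at most~$1$, so every target value $s$ is hit by some $W_{\ell}$; if that particular $W_{\ell}$ happens to end in $1$, the next one $W_{\ell+1}$ ends in $0$ and has the \emph{same} $1$-count, because $11\notin\mathcal{F}_{\mathbf{c}}$. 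The analogous family $W'_{\ell}=\mathrm{suff}_{\ell}(\sigma^{i}(1))\,0^{\,n-\ell}$ handles the $(1,0)$ case. This single construction simultaneously delivers the maximum, the intermediate values, and the boundary constraints, so the separate ``relocate the extremal window'' step you propose is not needed.

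For the $(1,1)$ uniqueness your plan is fine; note only that the residue of $f(i,j)$ modulo~$3$ determines the parity of both $i$ and $j$ when it is nonzero (from \eqref{eq:f2} one gets residues $0,1,2$ according as $j$ is even, $(i,j)\equiv(0,1)$, $(i,j)\equiv(1,1)$), which is exactly what the induction uses to descend, and the base-case worry you raise about $j=1$ coexisting with a larger $j$ is absorbed by the same descent.
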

\begin{proof}
The initial values can be showed by enumerating all the factors of length $1$ and $2$. Now, let $n\geq 2$ and suppose $n<3^{i}$ for some $i\geq 1$.

Clearly, for every $w\in\mathcal{W}_{n,0,0}$, $|w|_{1}\leq M_{\mathbf{c}}(n-2)$. So, $p_{2}(n,0,0)\leq M_{\mathbf{c}}(n-2)+1$.  We prove the inverse inequality in the following. For every $0\leq \ell\leq n-1$, let $W_{\ell}=0^{n-\ell}\mathrm{pref}_{\ell}(\sigma^{i}(1))$ that is a factor of $\sigma^{i}(01)$ and hence, a factor of $\mathbf{c}$. Note that $|W_{0}|_{1}=0$ and $|W_{n-2}|_{1}=M_{\mathbf{c}}(n-2)$. Since $|W_{\ell}|_{1}\leq |W_{\ell+1}|_{1}\leq |W_{\ell}|_{1}+1$, we know that $|W_{\ell}|_{1}$ changes continuously from $0$ to $M_{\mathbf{c}}(n-2)$ {while $\ell$ takes values from $0$ to $n-2$}. Therefore, for every $0\leq s\leq M_{\mathbf{c}}(n-2)$, there exists $0\leq \ell\leq n-2$ such that $|W_{\ell}|_{1}=s$. If the last letter of $W_{\ell}$ is $0$, then $W_{\ell}\in \mathcal{W}_{n,0,0}$. Otherwise, $|W_{\ell+1}|_{1}=|W_{\ell}|_{1}=s$ since $11$ is not a factor of $\mathbf{c}$. So, $W_{\ell+1}\in\mathcal{W}_{n,0,0}$. This implies that $p_{2}(n,0,0)\geq M_{\mathbf{c}}(n-2)+1$ which proves \eqref{eq:2a}.

Since for every factor $w$ of $\mathbf{c}$, its reversal $\bar{w}$ is also a factor of $\mathbf{c}$, we have $p_{2}(n,1,0)=p_{2}(n,0,1)$. Then, applying a similar argument on the words $W_{\ell}^{\prime}=\mathrm{suff}_{\ell}(\sigma^{i}(1))0^{n-\ell}$ where $1\leq \ell \leq n-1$, we obtain \eqref{eq:2b}.

(In the rest of the proof, the symbol `$\equiv$', otherwise stated, means equality modulo $2$.)

Now, we prove \eqref{eq:2c} for the case $n\equiv 0$.  We first observe that for every $w\in\mathcal{W}_{n,1,1}$, $|w|\equiv 1$. Since the number of $0$ between two successive $1$ must be $3^{j}$ for some $j\geq 0$ and $3^{j}\equiv 1$, we have $|w|_{0}\equiv |w|_{1}-1$ for every $w\in\mathcal{W}_{n,1,1}$. Therefore, $|w|=|w|_{0}+|w|_{1}\equiv 1$. Hence, $\mathcal{W}_{n,1,1}=\emptyset$ when $n$ is an even number, which implies $p_{2}(n,1,1)=0$ when $n\equiv 0$.

In the following, we will prove \eqref{eq:2c} when $n\equiv 1$. For every $w\in\mathcal{W}_{n,1,1}$, \[n=|w|=|w|_{1}+|w|_{0}=1+f(i,|w|_{1}-1)\] for some $i\geq 1$. (Since if a word occurs in $\mathbf{c}$, then it will occur infinitely many times in $\mathbf{c}$, we can assume $i\geq 3$.) Therefore, we only need to prove that for every $m\geq 1$, there is only one integer $t_{m}\geq 2$ satisfying
\begin{equation}\label{eq:argly}
2m+1=1+f(i,t_{m})
\end{equation}
for some $i\geq 1$. We reason by induction. Since $\mathcal{W}_{3,1,1}=\{101\}$ and $\mathcal{W}_{5,1,1}=\{10001\}$, it follows that \eqref{eq:argly} holds for $m=1$ and $2$.
Assuming that \eqref{eq:argly} {holds} for every $\ell\leq m$, we prove it for $m+1$.  We only give the proof for the case $m=3m^{\prime}$; the other cases follow in a similar way. In this case, by inductive assumptions and \eqref{eq:f2},
\[
2(m+1)+1 = 3(2m^{\prime}+1) =3(1+f(i, t_{m^{\prime}}))=1+f(2i+1,2t_{m^{\prime}}+1),\]
which implies that there is a solution of \eqref{eq:argly} for $m+1$. Now, we prove the uniqueness. Let $t\geq 2$ be a solution of \eqref{eq:argly} for $m+1$. Then,
\begin{equation}
1+f(i,t)=2(m+1)+1=3(2m^{\prime}+1), \label{eq:argly2}
\end{equation}
which implies $f(i,t)\equiv 2~ (\mathrm{mod}~3)$.
According to \eqref{eq:f2}, this happens only if $(i,t)\equiv (1,1)$. Write $i=2i^{\prime}+1$ and $t=2t^{\prime}+1$. Then, by \eqref{eq:f2} and  \eqref{eq:argly2},
\[2m^{\prime}+1=\frac{1+f(i,t)}{3}=1+f(i^{\prime}+1,t^{\prime}).\]
By the inductive assumption, we know that $t^{\prime}$ is the unique solution of \eqref{eq:argly} for $m^{\prime}$. So, the only solution of \eqref{eq:argly} for $m+1$ is $2t_{m^{\prime}}+1$. 
\end{proof}
By Proposition \ref{lem:ab2}, for every $n\geq 2$, we have
\begin{equation*}
\mathcal{P}{^{(2)}_{\mathbf{c}}}(n)= M_{\mathbf{c}}(n-2)+2M_{\mathbf{c}}(n-1)+1+\frac{1+(-1)^{n+1}}{2}.
\end{equation*}
\subsection{Regularity of { the} $k$-abelian complexity of $\mathbf{c}$}
In this part, we prove the regularity of {the} $k$-abelian complexity of the Cantor sequence for every $k\geq 3$. 

Let $\mathcal{F}_{\mathbf{c}}$ denote the set of all factors of $\mathbf{c}.$ For every $u\in\mathcal{F}_{\mathbf{c}}$  and $\ell\geq 1,$ we define
\[\mathrm{Type}(\ell, u):=\big\{j=0,1,\cdots,3^{\ell}-1\mid u=c_{3^{\ell}n+j}\cdots c_{3^{\ell}n+j+|u|-1} \text{ for some }n\geq 0\big\}.\] 
The elements in $\mathrm{Type}(\ell, u)$ are called  \emph{types} of $u$ (with respect to $\ell$).
Clearly, for every $\ell$ and $u\in\mathcal{F}_{\mathbf{c}}$, $\mathrm{Card}(\mathrm{Type}(\ell,u))\geq 1$.

Every type of $u$ gives a decomposition of $u$ in the following sense. For every ${j}\in\mathrm{Type}(\ell,u)$, there is an integer $n\geq 0$ such that
\begin{align}
\notag u & = \left(c_{3^{\ell}n+j}\cdots c_{3^{\ell}(n+1)-1}\right)
\left(c_{3^{\ell}(n+1)} \cdots c_{3^{\ell}(n+h)-1}\right)
\left(c_{3^{\ell}(n+h)}\cdots c_{3^{\ell}n+j+|u|-1}\right)\\
& = \mathrm{suff}_{j_{0}}(\sigma^{\ell}(c_{n}))\, \sigma^{\ell}(c_{n+1}\cdots c_{n+h-1})\, \mathrm{pref}_{j_{1}}(\sigma^{\ell}(c_{n+h})),\label{eq:lword}
\end{align}
where $h=\lfloor \frac{|u|+{j}}{3^{\ell}}\rfloor$, $j_{0}=3^{\ell}-{j}$ and $j_{1}=j+|u|-3^{\ell}h$. The following lemma shows that every non-zero factor of $\mathbf{c}$, which is long enough, occurs in a (relatively) fixed position, i.e., has only one type. By a \emph{non-zero factor} we mean a factor that contains at least one letter `$1$'.
\begin{lemma}\label{lem:1word}
For every integer $\ell\geq 1$ and every non-zero factor $u\in \mathcal{F}_{\mathbf{c}}$ with $|u|> 3^{\ell}$, \[\mathrm{Card}(\mathrm{Type}(\ell, u))=1.\]
\end{lemma}
\begin{proof} We prove by induction on $\ell$. We  first prove the result for $\ell=1$.
Now, we  show that $\mathrm{Card}(\mathrm{Type}(1,u))=1$ for $u\in\mathcal{F}_{\mathbf{c}}(4)$ with $|u|_{1}>0$. We only verify the case $u=0001$ as an example; the rest can be verified in the same way. Suppose $0001=c_{n}c_{n+1}c_{n+2}c_{n+3}$. Since $c_{n+3}=1$, by \eqref{eq:rec}, we have $n\not\equiv 1 ~(\mathrm{mod}~ 3)$. If $n\equiv 2 ~(\mathrm{mod}~ 3)$, then by \eqref{eq:rec}, $0=c_{n+1}=c_{n+3}=1$, which is a contradiction. Thus, $\mathrm{Type}(1,0001)=\{0\}$.

For every non-zero factor $u\in\mathcal{F}_{\mathbf{c}}$ with $|u|> 4$, let $u=xvy$ where $v$ is the the first non-zero factor of length $4$ of $u$. Since $\mathrm{Type}(1,v)+|x|\equiv\mathrm{Type}(1,u) ~(\mathrm{mod}~ 3)$, we have \[\mathrm{Card}(\mathrm{Type}(1,u))=1.\]

Suppose the result holds for $\ell$. We prove it for $\ell+1$. Let $u\in\mathcal{F}_{\mathbf{c}}$ with $|u|> 3^{\ell+1}$ and $i_{0}\in\mathrm{Type}(\ell,u)$. Then,
\[u=c_{3^{\ell}n+i_{0}}\cdots c_{3^{\ell}n+i_{0}+|u|-1}\]
for some $n\geq 0$. By \eqref{eq:lword}, $u$ uniquely determines $i_{0}$, $|u|$ and $c_{n}c_{n+1}\cdots c_{n+h}$ where $h=\lfloor \frac{|u|+i_{0}}{3^{\ell}}\rfloor$. Since $h\geq 3$, $n\equiv i_{1} ~(\mathrm{mod}~ 3)$ where $i_{1}\in \mathrm{Type}(1,c_{n}\cdots c_{n+h})$. Therefore,
\begin{equation}
3^{\ell}n+i_{0}\equiv 3^{\ell}i_{1}+i_{0} \quad(\mathrm{mod}~ 3^{\ell+1}).\label{eq:lem1}
\end{equation}
By the inductive assumptions, $\mathrm{Card}(\mathrm{Type}(1,c_{n}\cdots c_{n+h}))=1$ and $\mathrm{Card}(\mathrm{Type}(\ell,u))=1$. So, by \eqref{eq:lem1}, we have \[\mathrm{Card}(\mathrm{Type}(\ell+1,u))=1.\]
\end{proof}

\begin{lemma}\label{lem:lword}
For every integer $\ell\geq 1$ and every non-zero factor $u\in \mathcal{F}_{\mathbf{c}}$ with $3^{\ell}<|u|\leq 3^{\ell+1}$, \[1\leq \mathrm{Card}(\mathrm{Type}(\ell+1, u))\leq 2.\]
\end{lemma}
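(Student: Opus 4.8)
The plan is to bound $\mathrm{Card}(\mathrm{Type}(\ell+1,u))$ by combining the uniqueness supplied by Lemma \ref{lem:1word} (applied at level $\ell$) with a short analysis modulo $3$. The lower bound $\mathrm{Card}(\mathrm{Type}(\ell+1,u))\ge 1$ is immediate. For the upper bound, I would first note that since $|u|>3^{\ell}$ and $u$ is non-zero, Lemma \ref{lem:1word} gives a unique $i_0$ with $\mathrm{Type}(\ell,u)=\{i_0\}$. Any type $j\in\mathrm{Type}(\ell+1,u)$ reduces to $i_0$ modulo $3^{\ell}$, so $j=3^{\ell}r+i_0$ for some $r\in\{0,1,2\}$; concretely, if $u$ starts at the $\sigma^{\ell}$-block indexed by $n$ then $r\equiv n\pmod 3$. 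Hence $\mathrm{Card}(\mathrm{Type}(\ell+1,u))$ equals the number of residues $n\bmod 3$ realised over all occurrences of $u$, and it suffices to show that not all three residues can occur.

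Next I would reduce the question to a short factor. Applying the decomposition \eqref{eq:lword} at level $\ell$ with type $i_0$, the word $u$ meets only the blocks $c_n,c_{n+1},\dots,c_{n+h}$, where $h=\lfloor(|u|+i_0)/3^{\ell}\rfloor$; the hypothesis $3^{\ell}<|u|\le 3^{\ell+1}$ together with $0\le i_0<3^{\ell}$ forces $h\in\{1,2,3\}$. Writing $j_0=3^{\ell}-i_0\ge 1$ and $j_1=i_0+|u|-3^{\ell}h$, I would check that $u$ determines the block pattern $P':=c_n\cdots c_{n+h}$ when $j_1\ge 1$, and $P':=c_n\cdots c_{n+h-1}$ when $j_1=0$ (the last block then being uncovered): indeed $\sigma^{\ell}(0)=0^{3^{\ell}}$ while $\sigma^{\ell}(1)$ begins and ends with $1$, so each covered block is recovered from whether the corresponding prefix, suffix, or middle chunk of $u$ is all zeros. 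Consequently $u$ occurs at block $n$ if and only if $P'$ occurs at block $n$, so the residue set of $u$ coincides with $\mathrm{Type}(1,P')$, where $|P'|\in\{2,3,4\}$.

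It remains to bound $\mathrm{Card}(\mathrm{Type}(1,P'))$, and this is where the main work lies. Since $u$ is non-zero, $P'$ cannot be the all-zero word $0^{|P'|}$. If $|P'|=4$, then $P'$ is a non-zero factor of length $>3$ and Lemma \ref{lem:1word} gives $\mathrm{Card}(\mathrm{Type}(1,P'))=1$. If $|P'|\in\{2,3\}$, I would enumerate the factors realised at each residue directly from \eqref{eq:rec}: at residue $0$ a length-$3$ window reads $(c_k,0,c_k)$, at residue $1$ it reads $(0,c_k,c_{k+1})$, and at residue $2$ it reads $(c_k,c_{k+1},0)$ (and similarly for length $2$). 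A short case check then shows that the only factor occurring at all three residues is the all-zero word, which is precisely the case excluded by non-zeroness; hence $\mathrm{Card}(\mathrm{Type}(1,P'))\le 2$. Combining the three cases yields $\mathrm{Card}(\mathrm{Type}(\ell+1,u))\le 2$.

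I expect the middle step to be the main obstacle: verifying that $u$ faithfully encodes the short block pattern $P'$, and in particular handling the boundary case $j_1=0$ (where $u$ ends exactly at a block boundary and $P'$ is one letter shorter) so that the residue count of $u$ really equals $\mathrm{Card}(\mathrm{Type}(1,P'))$. The closing enumeration is routine but must be carried out for both lengths $2$ and $3$, relying only on the facts that $11$ never occurs and that blocks of zeros have lengths that are powers of $3$.
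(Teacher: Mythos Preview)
Your proposal is correct and follows the same skeleton as the paper: invoke Lemma~\ref{lem:1word} at level~$\ell$ to get the unique residue $i_0$, observe that $u$ determines the block pattern $v=c_n\cdots c_{n+h}$ (your $P'$), and then bound the number of admissible residues $n\bmod 3$. The only difference is in this last step. Where you split into the cases $|P'|\in\{2,3,4\}$ and enumerate the non-zero short factors, the paper dispatches all cases at once: since $v$ is non-zero it has a first `$1$' at some position $q(v)$, and the recurrence \eqref{eq:rec} forces $n+q(v)\not\equiv 1\pmod 3$, leaving at most two residues for $n$. This avoids the enumeration entirely. On the other hand, your treatment of the boundary case $j_1=0$ (shrinking $P'$ by one letter) is more careful than the paper's, which asserts that $u$ determines $c_{n+h}$ even though that block may be uncovered; the paper's argument is still valid because only $q(v)$ is actually used, and the first `$1$' always lies in the covered portion.
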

\begin{proof}
Let $u\in\mathcal{F}_{\mathbf{c}}$ with $3^{\ell}<|u|\leq 3^{\ell+1}$ and $i_{0}\in\mathrm{Type}(\ell,u)$. Then,
$u=c_{3^{\ell}n+i_{0}}\cdots c_{3^{\ell}n+i_{0}+|u|-1}$
for some $n\geq 0$. By \eqref{eq:lword}, $u$ uniquely determines $i_{0}$, $|u|$ and $c_{n}c_{n+1}\cdots c_{n+h}=:v,$ where $h=\lfloor \frac{|u|+i_{0}}{3^{\ell}}\rfloor$. Note that $v$ is a non-zero factor. Write $q(v):=\max\{j\mid 0^{j} \text{ is a prefix of }v\}$. Then $c_{n+q(v)}=1$, which implies $n+q(v)\not\equiv 1 ~(\mathrm{mod}~ 3)$ by \eqref{eq:rec}. So,
\begin{equation}\label{eq:lem2}
3^{\ell}n+i_{0}\equiv -3^{\ell}q(v)+i_{0} \text{ or } 3^{\ell}(2-q(v))+i_{0} ~(\mathrm{mod}~ 3^{\ell+1}).
\end{equation}
The result follows from Lemma \ref{lem:1word} and the above formula.
\end{proof}

In the rest of this section, let $i$  be the integer satisfying \[3^i+1<k\leq 3^{i+1}+1.\]

To study the regularity of $\{p_{k}(n,x,y)\}_{n\geq 1}$ for $x,y\in\mathcal{F}_{\mathbf{c}}(k-1)$, 
our idea is the following. We first give the upper bound of $p_{k}(n,\cdot,\cdot)$ by using $M_{\mathbf{c}}(\cdot)$, which is a $3$-regular sequence according to Corollary \ref{max:min}. Then, by constructing sufficiently many words which belong to different $k$-abelian equivalence classes, we show that the upper bound can be reached. Therefore, the regularity of $\{p_{k}(n,x,y)\}_{n\geq 1}$ follows from the regularity of $\{M_{\mathbf{c}}(n)\}_{n\geq 1}$.

The following lemma contributes to the construction of words that belong to different $k$-abelian equivalence classes. 
\begin{lemma}\label{lem:w}
Let $\alpha\in\{0,1\}$. For every $\ell\geq 1$ and every $h=1,2,\cdots, M_{\mathbf{c}}(\ell)$, there is a word $W_{h}\in\mathcal{F}_{\mathbf{c}}(\ell+3)$ such that $|W_{h}|_{1}=h$ and $W_h=00U_{h}\alpha$, where $U_h\in\mathcal{F}_{\mathbf{c}}(\ell)$.  
\end{lemma}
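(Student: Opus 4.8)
The plan is to realize all the required values of the $1$-count by sliding a window of fixed length $\ell+3$ across a single long factor of $\mathbf{c}$, and to pin down the prefix $00$ and the suffix $\alpha$ using the self-similar structure together with the fact that $11$ does not occur in $\mathbf{c}$. Concretely, I would fix an integer $j$ with $3^{j}>\ell+3$ and work inside the word $Z=z_{0}z_{1}z_{2}\cdots:=\sigma^{j}(01)=0^{3^{j}}\,\mathrm{pref}_{3^{j}}(\mathbf{c})$, which is a factor of $\mathbf{c}$ because $01$ is a factor and $\sigma$ carries factors to factors; here I use $\sigma^{j}(0)=0^{3^{j}}$ and the fact that $\sigma^{j}(1)=\mathrm{pref}_{3^{j}}(\mathbf{c})$ is a prefix of $\mathbf{c}$. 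Thus $z_{m}=0$ for $m<3^{j}$.

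For $0\le p\le 3^{j}-2$ let $g(p)$ be the number of occurrences of $1$ in the window $z_{p}z_{p+1}\cdots z_{p+\ell+2}$ of length $\ell+3$. Since the left endpoint $p$ lies in the initial block $0^{3^{j}}$, every such window begins with $00$; moreover passing from $p$ to $p+1$ deletes $z_{p}=0$ and appends $z_{p+\ell+3}\in\{0,1\}$, so $g$ is non-decreasing with steps in $\{0,1\}$, while $g(0)=0$ and $g(3^{j}-2)=\left|\mathrm{pref}_{\ell+1}(\mathbf{c})\right|_{1}=M_{\mathbf{c}}(\ell+1)$. By the discrete intermediate value property, $g$ attains every integer value in the range $[0,M_{\mathbf{c}}(\ell+1)]$.

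The heart of the argument is to choose, for a given target $h\in\{1,\dots,M_{\mathbf{c}}(\ell)\}$ and a given $\alpha$, a position whose window additionally ends in $\alpha$; the chosen window is then written as $00\,U_{h}\,\alpha$ with $U_{h}\in\mathcal{F}_{\mathbf{c}}(\ell)$ its central factor, giving the desired $W_{h}$. For $\alpha=1$ I would take the first $p_{0}$ with $g(p_{0})=h$: the step into $h$ equals $+1$, and since the deleted letter $z_{p_{0}-1}=0$ this forces the appended last letter $z_{p_{0}+\ell+2}=1$. For $\alpha=0$ I would take the last $p_{1}$ with $g(p_{1})=h$ before $g$ increases; the ensuing $+1$ step gives $z_{p_{1}+\ell+3}=1$, whence $z_{p_{1}+\ell+2}=0$ because $11$ is not a factor of $\mathbf{c}$.

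The main obstacle is the boundary case $h=M_{\mathbf{c}}(\ell)$ with $\alpha=0$: the ``last $p_{1}$'' recipe needs $g$ to exceed $h$ at some later position, which fails precisely when $c_{\ell}=0$, since then $M_{\mathbf{c}}(\ell)=M_{\mathbf{c}}(\ell+1)$ is already the maximum of $g$. In that situation I would instead use the terminal window $z_{3^{j}-2}\cdots z_{3^{j}+\ell}=00\,\mathrm{pref}_{\ell+1}(\mathbf{c})$, whose last letter is $c_{\ell}=0=\alpha$ and whose $1$-count is $M_{\mathbf{c}}(\ell)$; when $c_{\ell}=1$ one has $M_{\mathbf{c}}(\ell)<M_{\mathbf{c}}(\ell+1)$ and the generic recipe applies for all admissible $h$. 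The remaining point is routine bookkeeping: one must verify in each case that the left endpoint stays inside $0^{3^{j}}$ (so that every ``$z_{\cdot}=0$'' used above indeed holds), which is guaranteed by the choice $3^{j}>\ell+3$ and $p\le 3^{j}-2$.
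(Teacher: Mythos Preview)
Your proposal is correct and takes essentially the same approach as the paper: both arguments slide a length-$(\ell+3)$ window across $\sigma^{s}(01)=0^{3^{s}}\mathrm{pref}_{3^{s}}(\mathbf{c})$, use the discrete intermediate value property of the resulting $1$-count, and then adjust the endpoint to force the desired last letter via the fact that $11\notin\mathcal{F}_{\mathbf{c}}$. Your ``first/last position with value $h$'' device is a slight repackaging of the paper's ``pick any $j_{h}$ and shift'' argument, and you are a bit more explicit than the paper about the boundary case $h=M_{\mathbf{c}}(\ell)$ with $\alpha=0$, but the underlying idea is the same.
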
 
\begin{proof}
For all $j=0,1,\cdots,\ell+1$, let \[W_{j}=0^{\ell+3-j}\mathrm{pref}_{j}(\sigma^{s}(1))\in\mathcal{F}_{c}(\ell+3),\] where $s\in\mathbb{N}$ satisfying $3^{s}>\ell+1$. Since $|W_{j}|_{1}\leq |W_{j+1}|_{1}\leq |W_{j}|_{1}+1$ and $|W_{\ell}|_{1}=M_{\mathbf{c}}(\ell)$, we know that $|W_{j}|_{1}$ changes from 0 to $M_{\mathbf{c}}(\ell)$ continuously while $j$ takes values from $0$ to $\ell$. So, for every $h=1,\cdots, M_{\mathbf{c}}(\ell)$, there is a $j_{h} (\leq \ell)$ such that $|W_{j_{h}}|_{1}=h$. Moreover, we can require that the last letter of $W_{j_{h}}$ is $0$. Otherwise, $1$ is the last letter of $W_{j_{h}}$. Then, $W_{j_{h}+1}$ ends with $0$ and $|W_{j_{h}+1}|_{1}=|W_{j_{h}}|_{1}$.

There also is a $j^{\prime}_{h}$ such that $|W_{j^{\prime}_{h}}|_{1}=h$, of which the last letter is $1$. Otherwise, $0$ is the last letter of $W_{j^{\prime}_{h}}$. Let $m_h:=\max\{q\mid 0^{q}~\text{is a suffix of}~W_{j^{\prime}_{h}}\}$. Since  $|W_{j^{\prime}_{h}}|_{1}=h\geq 1$, we always have $m_{h}< j^{\prime}_{h}$. Then, $W_{j^{\prime}_{h}-m_h}$ ends with $1$ and $|W_{j^{\prime}_{h}-m_h}|_{1}=|W_{j^{\prime}_{h}}|_{1}$. If $m_{h}> j^{\prime}_{h}$,
\end{proof}

Now, we shall show the regularity of $\{p_{k}(n,x,y)\}_{n\geq 1}$ for all $x,y\in\mathcal{F}_{\mathbf{c}}(k-1)$. 

\begin{lemma}\label{lem:0k-0k}
$\{p_{k}(n,0^{k-1},0^{k-1})\}_{n\geq 1}$ is a $3$-regular sequence.
\end{lemma}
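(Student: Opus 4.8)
The plan is to prove that $\{p_{k}(n,0^{k-1},0^{k-1})\}_{n\geq 1}$ is $3$-regular by establishing an explicit closed-form formula for it in terms of the sequence $M_{\mathbf{c}}$, which is already known to be $3$-regular by Corollary \ref{max:min}. Recall that by Theorem \ref{thm:B} we have
\[
p_{k}(n,0^{k-1},0^{k-1})=\mathrm{Card}\bigl(\{|w|_{1}\mid w\in\mathcal{W}_{n,0^{k-1},0^{k-1}}\}\bigr),
\]
so I only need to count how many distinct values $|w|_1$ takes as $w$ ranges over factors of length $n$ that both begin and end with $0^{k-1}$. Since the attainable values of $|w|_1$ form a set of nonnegative integers, the key point will be that they form a full interval $\{0,1,\dots,M\}$ of integers (a continuity/connectedness argument), so that the cardinality is just $M+1$ where $M$ is the maximum of $|w|_1$ over this class. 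This reduces everything to identifying the maximum value and checking it is $3$-regular.

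\textbf{Step 1: the maximum.} First I would show that the maximum of $|w|_1$ over $w\in\mathcal{W}_{n,0^{k-1},0^{k-1}}$ equals $M_{\mathbf{c}}(n-2(k-1))$ (or a closely related shift). The idea is that prescribing $0^{k-1}$ as both prefix and suffix forces $(k-1)$ zeros at each end, contributing nothing to $|w|_1$, so the largest digit sum is governed by the middle block of length $n-2(k-1)$; the maximal such interior sum is exactly $M_{\mathbf{c}}(n-2(k-1))$ by Lemma \ref{lem:ab}. I must confirm that such an extremal $w$ is genuinely a factor of $\mathbf{c}$ with the required prefix and suffix $0^{k-1}$ — this is where I would invoke Lemma \ref{lem:rs} (to locate long blocks of zeros flanking the dense interior) and the explicit construction in Lemma \ref{lem:w}, whose words $W_h=00U_h\alpha$ are built precisely to have prescribed $|W_h|_1=h$ and prescribed boundary behaviour.

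\textbf{Step 2: connectedness (the full interval).} The main obstacle is proving that \emph{every} intermediate value $h$ with $0\leq h\leq M_{\mathbf{c}}(n-2(k-1))$ is actually attained by some factor in $\mathcal{W}_{n,0^{k-1},0^{k-1}}$. This is the analogue of the continuity argument used in the proof of Proposition \ref{lem:ab2} and in Lemma \ref{lem:w}: I would use a family $W_j=0^{\,n-j-(k-1)}\,\mathrm{pref}_{j}(\sigma^{s}(1))\,0^{k-1}$ for suitable large $s$, observe that consecutive members satisfy $|W_j|_1\leq |W_{j+1}|_1\leq |W_j|_1+1$, and hence $|W_j|_1$ sweeps through every integer from $0$ up to the maximum. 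The delicate part is guaranteeing that each $W_j$ retains the suffix $0^{k-1}$ (and prefix $0^{k-1}$) while $j$ varies; I would handle the boundary by the same trick as in Lemma \ref{lem:w}, replacing $W_{j}$ by $W_{j+1}$ or by a shift $W_{j-m}$ when the free endpoint accidentally becomes a $1$, using that $11$ never occurs in $\mathbf{c}$ and that zeros can be absorbed without changing $|w|_1$.

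\textbf{Step 3: assembling regularity.} Combining Steps 1 and 2 gives the closed form
\[
p_{k}(n,0^{k-1},0^{k-1})=M_{\mathbf{c}}\bigl(n-2(k-1)\bigr)+1
\]
for all $n$ large enough (with finitely many small-$n$ exceptional values handled by direct enumeration). Since $\{M_{\mathbf{c}}(n)\}_{n\geq 1}$ is $3$-regular by Corollary \ref{max:min}, and $3$-regularity is preserved under a constant index shift and under adding a constant, the sequence $\{p_{k}(n,0^{k-1},0^{k-1})\}_{n\geq 1}$ is $3$-regular as well. I expect Step 2 to be the genuine difficulty, since it requires careful bookkeeping of how the forced boundary zeros interact with the sweeping construction; Steps 1 and 3 are essentially applications of results already established in the excerpt.
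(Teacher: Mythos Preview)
Your approach has a genuine gap: the set $\{|w|_{1}:w\in\mathcal{W}_{n,0^{k-1},0^{k-1}}\}$ is \emph{not} a full interval of integers for $k\geq 3$, so the connectedness argument of Step~2 fails, and the closed form in Step~3 is wrong. The point you are missing is that $k-1>3^{i}$ (where $3^{i}+1<k\le 3^{i+1}+1$), so the longest block of zeros inside $\sigma^{i+1}(1)$, which has length $3^{i}$, is strictly shorter than $k-1$. Hence any occurrence of the prefix $0^{k-1}$ (and likewise the suffix) must overlap a full $\sigma^{i+1}(0)$-block of $\mathbf{c}=\sigma^{i+1}(\mathbf{c})$; this forces every $w\in\mathcal{W}_{n,0^{k-1},0^{k-1}}$ to sit inside a configuration $\sigma^{i+1}(0)\,\sigma^{i+1}(u)\,\sigma^{i+1}(0)$ (possibly with an extra $\sigma^{i+1}(0)$ on one or both sides), and therefore $|w|_{1}=|\sigma^{i+1}(u)|_{1}=2^{i+1}|u|_{1}$ is always a multiple of $2^{i+1}$. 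Your sweeping family $W_{j}=0^{\,n-j-(k-1)}\mathrm{pref}_{j}(\sigma^{s}(1))0^{k-1}$ simply fails to be a factor of $\mathbf{c}$ for most~$j$ once $k\ge 3$.

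Concretely, take $k=3$ (so $i=0$, $2^{i+1}=2$) and $n=12$. Direct inspection of $\mathbf{c}$ shows that every length-$12$ factor beginning and ending with $00$ has $|w|_{1}\in\{0,2\}$, so $p_{3}(12,00,00)=2$; your formula would give $M_{\mathbf{c}}(8)+1=4$. The paper's correct formula is
\[
p_{k}(n,0^{k-1},0^{k-1})=M_{\mathbf{c}}(\ell)+1,\qquad \ell=\left\lfloor\frac{n-2k+2}{3^{i+1}}\right\rfloor,
\]
obtained by counting the possible values of $|u|_{1}$ rather than of $|w|_{1}$ directly, and then realising each such value via $0^{t}\sigma^{i+1}(U_{h})0^{k-1}$ with $U_{h}$ supplied by Lemma~\ref{lem:w}. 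Your proposed formula $M_{\mathbf{c}}(n-2(k-1))+1$ is exactly the $k=2$ case (equation~\eqref{eq:2a}), and it does not extend to $k\ge 3$.
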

\begin{proof}
Without loss of generality, we can assume that $n\geq 2\cdot 3^{i+1}+ 2k-2$, since changing finite terms of a sequence does not change its regularity. Noticing that $3^{i}< k-1\leq 3^{i+1}$, the occurrence of  each $w\in\mathcal{W}_{n,0^{k-1},0^{k-1}}$ in $\mathbf{c}$ must be one of the four forms in Figure \ref{fig:1}. 

\begin{figure}[htbp]
\scalebox{0.8}{
\begin{tabular}{cc}
\begin{minipage}{0.5\textwidth}
\begin{tikzpicture}
\filldraw[fill=blue!30,thick] (0,0) rectangle (4.2,0.5);
\filldraw[fill=red!30,thick] (1.25,0) rectangle (3,0.5);
\node at (0.65,0.25) {{$\sigma^{i+1}(0)$}};
\node at (2.1,0.25) {{$\sigma^{i+1}(u)$}};
\node at (3.6,0.25) {{$\sigma^{i+1}(0)$}};
\filldraw[fill=brown!30,thick] (0.3,0) rectangle (4,-0.5);
\filldraw[fill=green!30,thick] (1.1,0) rectangle (3.2,-0.5);
\node at (0.75,-0.25) {{$0^{k-1}$}};
\node at (2.1,-0.25) {{$w^{\prime}$}};
\node at (3.6,-0.25) {{$0^{k-1}$}};
\node at (2,-0.8) { Form $1$};
\draw[<-,thick] (4.3,0.25) -- (5,0.25) node [right] { A fragment of $\mathbf{c}$};
\draw[<-,thick] (4.3,-0.25) -- (5,-0.25) node [right] { $w=0^{k-1}w^{\prime}0^{k-1}$};
\end{tikzpicture}
\end{minipage} &
\begin{minipage}{0.5\textwidth}
\begin{tikzpicture}
\filldraw[fill=blue!30,thick] (0,0) rectangle (4.2,0.5);
\filldraw[fill=red!30,thick] (1.25,0) rectangle (3,0.5);
\filldraw[fill=blue!30,thick] (4.2,0) rectangle (5.4,0.5);
\node at (0.65,0.25) {{$\sigma^{i+1}(0)$}};
\node at (2.1,0.25) {{$\sigma^{i+1}(u)$}};
\node at (3.6,0.25) {{$\sigma^{i+1}(0)$}};
\node at (4.8,0.25) {{$\sigma^{i+1}(0)$}};
\filldraw[fill=brown!30,thick] (0.3,0) rectangle (4.8,-0.5);
\filldraw[fill=green!30,thick] (1.1,0) rectangle (3.8,-0.5);
\node at (0.75,-0.25) {{$0^{k-1}$}};
\node at (2.6,-0.25) {{$w^{\prime}$}};
\node at (4.4,-0.25) {{$0^{k-1}$}};
\node at (2.6,-0.8) { Form $2$};
\end{tikzpicture}
\end{minipage} \\
\begin{minipage}{0.5\textwidth}
\begin{tikzpicture}
\filldraw[fill=blue!30,thick] (0,0) rectangle (4.2,0.5);
\filldraw[fill=red!30,thick] (1.25,0) rectangle (3,0.5);
\filldraw[fill=blue!30,thick] (-1.2,0) rectangle (0,0.5);
\node at (-0.6,0.25) {{$\sigma^{i+1}(0)$}};
\node at (0.65,0.25) {{$\sigma^{i+1}(0)$}};
\node at (2.1,0.25) {{$\sigma^{i+1}(u)$}};
\node at (3.6,0.25) {{$\sigma^{i+1}(0)$}};
\filldraw[fill=brown!30,thick] (-0.6,0) rectangle (4,-0.5);
\filldraw[fill=green!30,thick] (0.3,0) rectangle (3.2,-0.5);
\node at (-.1,-0.25) {{$0^{k-1}$}};
\node at (1.7,-0.25) {{$w^{\prime}$}};
\node at (3.6,-0.25) {{$0^{k-1}$}};
\node at (0.8,-0.8) { Form $3$};
\end{tikzpicture}
\end{minipage} &
\begin{minipage}{0.5\textwidth}
\begin{tikzpicture}
\filldraw[fill=blue!30,thick] (0,0) rectangle (4.2,0.5);
\filldraw[fill=red!30,thick] (1.25,0) rectangle (3,0.5);
\filldraw[fill=blue!30,thick] (-1.2,0) rectangle (0,0.5);
\filldraw[fill=blue!30,thick] (4.2,0) rectangle (5.4,0.5);
\node at (-0.6,0.25) {{$\sigma^{i+1}(0)$}};
\node at (0.65,0.25) {{$\sigma^{i+1}(0)$}};
\node at (2.1,0.25) {{$\sigma^{i+1}(u)$}};
\node at (3.6,0.25) {{$\sigma^{i+1}(0)$}};
\node at (4.8,0.25) {{$\sigma^{i+1}(0)$}};
\filldraw[fill=brown!30,thick] (-.6,0) rectangle (4.8,-0.5);
\filldraw[fill=green!30,thick] (.3,0) rectangle (3.8,-0.5);
\node at (-.1,-0.25) {{$0^{k-1}$}};
\node at (2.1,-0.25) {{$w^{\prime}$}};
\node at (4.4,-0.25) {{$0^{k-1}$}};
\node at (1.4,-0.8) { Form $4$};
\end{tikzpicture}
\end{minipage}
\end{tabular}
}
\caption{}\label{fig:1}
\end{figure}

In all the four forms, we have $|w|_{1}=2^{i+1}|u|_{1}$ and
$|u|=\ell$ or $\ell-1$, which implies
\begin{equation}\label{eq:0k}
p_{k}(n,0^{k-1},0^{k-1})\leq M_{\mathbf{c}}(\ell)+1,
\end{equation}
where $\ell=\left\lfloor \frac{n-2k+2}{3^{i+1}}\right\rfloor$.
Next, we prove the inverse of \eqref{eq:0k}. That is
\begin{equation}\label{eq:0k2}
p_{k}(n,0^{k-1},0^{k-1})\geq M_{\mathbf{c}}(\ell)+1.
\end{equation}
Applying Lemma \ref{lem:w} for the above $\ell$ and $\alpha=0$, we have 
\[W_{h}=00U_{h}0\in \mathcal{F}_{\mathbf{c}}(\ell+3) \text{ with } |W_{h}|=h\]
for all $h=1,2,\cdots, M_{\mathbf{c}}(\ell)$. 
Set $t:=n-3^{i+1}\ell-k+1$. Then, $k-1\leq t <k-1+3^{i+1}$. Therefore,
\[0^{t}\sigma^{i+1}(U_{h})0^{k-1}\in\mathcal{W}_{n,0^{k-1},0^{k-1}} \text{ and } |0^{t}\sigma^{i+1}(U_{h})0^{k-1}|_{1}=2^{i+1}h\]
for every $h=1,\cdots,M_{\mathbf{c}}(\ell)$. Noting also that $0^{n}\in\mathcal{W}_{n,0^{k-1},0^{k-1}}$, the inequality \eqref{eq:0k2} holds. The result then follows from \eqref{eq:0k}, \eqref{eq:0k2} and Corollary \ref{max:min}.
\end{proof}

For every non-zero factor $v\in \mathcal{F}_{\mathbf{c}}(k-1)$, let $z_{v}:=\max\{p \mid 0^{p} \text{ is a suffix of }v\}$ and 
\[\tilde{L}_{v}:=\{q~\text{mod}~3^{i+1} \mid c_{q-(k-2-z_{v})}\cdots c_{q-1}c_{q}c_{q+1}\cdots c_{q+z_{v}}=v\},\]
where $c_{q}$ is the last $1$ in $v$. Then, it follows from Lemma \ref{lem:lword} that $1\leq \text{Card}(\tilde{L}_{v})\leq 2$. Moreover, if $\tilde{L}_{v}=\{q_{1},q_{2}\}$, where $0\leq q_{1}<q_{2}\leq 3^{i+1}-1$, then by \eqref{eq:lem2}, we have $q_{2}=q_{1}+2\cdot 3^{i}$.

For a word $w=w_{0}w_{1}\cdots w_{n-1}\in \mathcal{A}^{n}$, the reversal of $w$ is defined to be $\bar{w}=w_{n-1}\cdots w_{1}w_{0}$.  When $w=uv$, we write $wv^{-1}:=u$ and $u^{-1}w:=v$ by convention.

\begin{lemma}\label{lem:0k-x}
For all non-zero factors {$x,y\in\mathcal{F}_{\mathbf{c}}(k-1)$}, two sequences $\{p_{k}(n,0^{k-1},y)\}_{n\geq 1}$ and $\{p_{k}(n,x,0^{k-1})\}_{n\geq 1}$ are both $3$-regular sequences.
\end{lemma}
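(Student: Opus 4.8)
The plan is to follow the scheme of Lemma \ref{lem:0k-0k}: bound $p_k(n,\cdot,\cdot)$ from above by an expression in $M_{\mathbf{c}}$, realize that bound by an explicit construction, and then inherit $3$-regularity from $\{M_{\mathbf{c}}(n)\}_{n\ge 1}$ via Corollary \ref{max:min}. First I would reduce the two assertions to a single one. Since the factor set of $\mathbf{c}$ is closed under reversal and $|w|_1=|\bar w|_1$, the map $w\mapsto\bar w$ is a bijection that respects $\sim_k$ and exchanges prefix with suffix; hence $p_k(n,x,0^{k-1})=p_k(n,0^{k-1},\bar x)$, with $\bar x$ again a non-zero factor of length $k-1$. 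Thus it suffices to prove that $\{p_k(n,0^{k-1},y)\}_{n\ge 1}$ is $3$-regular for every non-zero $y\in\mathcal{F}_{\mathbf{c}}(k-1)$.

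Using Theorem \ref{thm:B} I would rewrite $p_k(n,0^{k-1},y)=\mathrm{Card}\{|w|_1\mid w\in\mathcal{W}_{n,0^{k-1},y}\}$. The non-zero suffix $y$ anchors every such $w$: since $3^i<k-1\le 3^{i+1}$, the set $\tilde L_y$ (via Lemma \ref{lem:lword}) forces the position of the last $1$ of $y$ to lie in one of the at most two residues of $\tilde L_y$ modulo $3^{i+1}$, while Lemma \ref{lem:1word} assigns $w$ a unique type because $w$ is non-zero and $|w|=n$ is large. I would then decompose $w$ by \eqref{eq:lword} with $\ell=i+1$ into a left boundary part, a full middle block $\sigma^{i+1}(u)$, and a right boundary part. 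The prefix $0^{k-1}$ (with $k-1>3^i$) forces the left boundary to be all-zero, hence to carry no $1$, and the anchored suffix $y$ fixes the right boundary together with the cut $j_1$. Consequently $|w|_1=2^{i+1}|u|_1+r$, where the constant $r=r(y)$ counts the $1$'s of the part of $y$ meeting the right boundary, and $u$ is the only free factor, of a length $\ell$ determined by $n$ and $y$ of the shape $\ell=\lfloor(n-\mathrm{const})/3^{i+1}\rfloor$.

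The counting step then mirrors Lemma \ref{lem:0k-0k}. For the upper bound, each admissible $u$ gives $|w|_1=2^{i+1}|u|_1+r$ with $0\le|u|_1\le M_{\mathbf{c}}(\ell)$, so at most $M_{\mathbf{c}}(\ell)+1$ distinct values arise per type. For the lower bound I would apply Lemma \ref{lem:w}, choosing the terminal letter $\alpha$ so that the constructed blocks $00U_h\alpha$ splice onto the fixed, $y$-anchored tail; this realizes every intermediate value of $|u|_1$, hence of $|w|_1$, by a genuine factor of $\mathbf{c}$ with prefix $0^{k-1}$ and suffix $y$. Combining the two bounds expresses $p_k(n,0^{k-1},y)$ as $M_{\mathbf{c}}(\ell)$ plus a bounded constant, and since $\ell$ is the same affine-then-floor function of $n$ with denominator $3^{i+1}$ used in Lemma \ref{lem:0k-0k}, the $3$-regularity follows from Corollary \ref{max:min}.

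The main obstacle will be the case $\mathrm{Card}(\tilde L_y)=2$, that is $\tilde L_y=\{q_1,q_1+2\cdot 3^i\}$ by \eqref{eq:lem2}. There $\mathcal{W}_{n,0^{k-1},y}$ splits into two families, one for each admissible type of the $y$-occurrence, and I must check that for each fixed $n$ their achievable value-sets either coincide or merge into a single run, so that the union still contributes exactly $M_{\mathbf{c}}(\ell)+O(1)$ distinct values rather than nearly doubling the count. Pinning down the precise constant $r$ and the correct floor-argument $\ell$ in each residue case, and verifying that the two types cannot inflate the count, is the delicate bookkeeping on which the clean $3$-regular behaviour rests.
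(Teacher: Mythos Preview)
Your overall strategy matches the paper's: reduce to $p_k(n,0^{k-1},y)$ by reversal, decompose each $w$ via \eqref{eq:lword} at level $i+1$, bound $p_k$ above and below by $M_{\mathbf{c}}$-expressions using Lemma \ref{lem:w}, and invoke Corollary \ref{max:min}. The one-type case is essentially as you outline, with only a small bookkeeping correction: since $y$ is non-zero the right boundary block always ends with a $\sigma^{i+1}(1)$, so it is cleaner to absorb that letter and track $\tilde u=u1$ (or $u01$); then $|\tilde u|_1$ ranges from $1$ to $M_{\mathbf{c}}(\ell(o_y)+1)$, giving exactly $M_{\mathbf{c}}(\ell(o_y)+1)$ values rather than your $M_{\mathbf{c}}(\ell)+1$.

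The genuine gap is in the case $\mathrm{Card}(\tilde L_y)=2$, where your expectation is exactly backwards. You plan to show that the two value-sets ``either coincide or merge into a single run, so that the union still contributes exactly $M_{\mathbf{c}}(\ell)+O(1)$ distinct values rather than nearly doubling the count.'' In fact the two value-sets are \emph{disjoint}, and the count \emph{does} double. Writing $\tilde L_y=\{o_y,\,o_y'=o_y+2\cdot 3^i\}$, the positional shift of $2\cdot 3^i$ translates---via $\sigma^{i+1}(1)=\sigma^i(1)\,0^{3^i}\,\sigma^i(1)$---into a shift of exactly $2^i$ in the boundary contribution $|\mathrm{suff}_{3^{i+1}-o_y-1}(\sigma^{i+1}(1))|_1$, so the achievable values of $|w|_1$ for the two types lie in distinct residue classes modulo $2^{i+1}$. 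Hence
\[
p_k(n,0^{k-1},y)=M_{\mathbf{c}}\bigl(\ell(o_y)+1\bigr)+M_{\mathbf{c}}\bigl(\ell(o_y')+1\bigr),
\]
which is still $3$-regular as a sum of two shifted copies of $\{M_{\mathbf{c}}(n)\}$. Your proposed ``merging'' argument would not go through, and your constant $r=r(y)$ is really $r=r(o_y)$, one value per type; recognising the disjointness (not the coalescence) of the two families is the missing idea.
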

\begin{proof}
For every $x\in\mathcal{F}_{\mathbf{c}}$, its reversal $\bar{x}\in\mathcal{F}_{\mathbf{c}}$, since $x$ is a factor of $\sigma^{{m}}(1)$ for some ${m}\geq 1$ and $\overline{\sigma^{{m}}(1)}=\sigma^{{m}}(1)$. So, $p_{k}(n,x,0^{k-1})=p_{k}(n,0^{k-1},\bar{x})$ for every $n\geq 1$. Thus, we only need to verify the regularity of {$\{p_{k}(n,0^{k-1},y)\}_{n\geq 1}$} for every non-zero factor {$y\in\mathcal{F}_{\mathbf{c}}(k-1)$}.

Since changing finite terms of a sequence does not change its regularity,  we can assume that $n\geq 2\cdot 3^{i+1}+ 2k-2$. Recall that $3^{i}< k-1\leq 3^{i+1}$. Each occurrence of every $w\in\mathcal{W}_{n,0^{k-1},y}$ in $\mathbf{c}$ must be one of the six forms in Figure \ref{fig:2}. In all the six forms, for every $o_y \in \tilde{L}_y$, we have 
\begin{equation}\label{eq:wno}
|w|_{1}=2^{i+1}|\tilde{u}|_{1}-|\mathrm{suff}_{3^{i+1}-o_y-1}(\sigma^{i+1}(1))|_1:=n_{o_{y}}
\end{equation}
and
$|\tilde{u}|=\ell(o_{y})$ or $\ell(o_{y})+1$, where 
\[\ell(o_{y})=\left\lfloor \frac{n-k-o_y-z_y}{3^{i+1}}\right\rfloor 
\text{ and }
\tilde{u} = \begin{cases}
u01, & \text{if }w \text{ is of Form }5 \text{ or }6,\\
u1, &  \text{otherwise}.\\
\end{cases} \]
\vspace{-3ex}
\begin{figure}[htbp]
\scalebox{0.8}{
\begin{tabular}{cc}
\begin{minipage}{0.5\textwidth}
\begin{tikzpicture}
\filldraw[fill=blue!30,thick] (0,0) rectangle (4.2,0.5);
\filldraw[fill=red!30,thick] (1.25,0) rectangle (3,0.5);
\node at (0.65,0.25) {{$\sigma^{i+1}(0)$}};
\node at (2.1,0.25) {{$\sigma^{i+1}(u)$}};
\node at (3.6,0.25) {{$\sigma^{i+1}(1)$}};
\filldraw[fill=brown!30,thick] (0.3,0) rectangle (4,-0.5);
\filldraw[fill=green!30,thick] (1.1,0) rectangle (3.2,-0.5);
\node at (0.75,-0.25) {{$0^{k-1}$}};
\node at (2.1,-0.25) {{$w^{\prime}$}};
\node at (3.6,-0.25) {{$y$}};
\node at (2,-.9) { Form $1$};
\draw[<-,thick] (4.3,0.25) -- (5,0.25) node [right] { A fragment of $\mathbf{c}$};
\draw[<-,thick] (4.3,-0.25) -- (5,-0.25) node [right] { $w=0^{k-1}w^{\prime}y$};
\draw [|<->|,thick] (3.2,-0.7) -- (4,-0.7) node [below,midway] {$ k-1$};
\draw [|<->|,thick] (3,0.7) -- (4,0.7) node [above,midway] {$o_y+z_y$};
\end{tikzpicture}
\end{minipage} \vspace{1ex}& 
\begin{minipage}{0.5\textwidth}
\begin{tikzpicture}
\filldraw[fill=blue!30,thick] (0,0) rectangle (4.2,0.5);
\filldraw[fill=red!30,thick] (1.25,0) rectangle (3,0.5);
\filldraw[fill=blue!30,thick] (-1.2,0) rectangle (0,0.5);
\node at (-0.6,0.25) {{$\sigma^{i+1}(0)$}};
\node at (0.65,0.25) {{$\sigma^{i+1}(0)$}};
\node at (2.1,0.25) {{$\sigma^{i+1}(u)$}};
\node at (3.6,0.25) {{$\sigma^{i+1}(1)$}};
\filldraw[fill=brown!30,thick] (-0.6,0) rectangle (4,-0.5);
\filldraw[fill=green!30,thick] (0.3,0) rectangle (3.2,-0.5);
\node at (-.1,-0.25) {{$0^{k-1}$}};
\node at (1.7,-0.25) {{$w^{\prime}$}};
\node at (3.6,-0.25) {{$y$}};
\node at (1.4,-0.8) { Form $2$};
\end{tikzpicture}\vspace{-4ex}
\end{minipage}\\
\begin{minipage}{0.5\textwidth}
\begin{tikzpicture}
\filldraw[fill=blue!30,thick] (0,0) rectangle (4.2,0.5);
\filldraw[fill=red!30,thick] (1.25,0) rectangle (3,0.5);
\filldraw[fill=blue!30,thick] (4.2,0) rectangle (5.4,0.5);
\node at (0.65,0.25) {{$\sigma^{i+1}(0)$}};
\node at (2.1,0.25) {{$\sigma^{i+1}(u)$}};
\node at (3.6,0.25) {{$\sigma^{i+1}(1)$}};
\node at (4.8,0.25) {{$\sigma^{i+1}(0)$}};
\filldraw[fill=brown!30,thick] (0.3,0) rectangle (4.8,-0.5);
\filldraw[fill=green!30,thick] (1.1,0) rectangle (3.8,-0.5);
\node at (0.75,-0.25) {{$0^{k-1}$}};
\node at (2.6,-0.25) {{$w^{\prime}$}};
\node at (4.4,-0.25) {{$y$}};
\node at (2.6,-0.8) { Form $3$};
\end{tikzpicture}
\end{minipage}\vspace{1ex}&
\begin{minipage}{0.5\textwidth}
\begin{tikzpicture}
\filldraw[fill=blue!30,thick] (0,0) rectangle (4.2,0.5);
\filldraw[fill=red!30,thick] (1.25,0) rectangle (3,0.5);
\filldraw[fill=blue!30,thick] (-1.2,0) rectangle (0,0.5);
\filldraw[fill=blue!30,thick] (4.2,0) rectangle (5.4,0.5);
\node at (-0.6,0.25) {{$\sigma^{i+1}(0)$}};
\node at (0.65,0.25) {{$\sigma^{i+1}(0)$}};
\node at (2.1,0.25) {{$\sigma^{i+1}(u)$}};
\node at (3.6,0.25) {{$\sigma^{i+1}(1)$}};
\node at (4.8,0.25) {{$\sigma^{i+1}(0)$}};
\filldraw[fill=brown!30,thick] (-.6,0) rectangle (4.8,-0.5);
\filldraw[fill=green!30,thick] (.3,0) rectangle (3.8,-0.5);
\node at (-.1,-0.25) {{$0^{k-1}$}};
\node at (2.1,-0.25) {{$w^{\prime}$}};
\node at (4.4,-0.25) {{$y$}};
\node at (1.4,-0.8) { Form $4$};
\end{tikzpicture}
\end{minipage}\vspace{1ex} \\
\begin{minipage}{0.5\textwidth}
\begin{tikzpicture}
\filldraw[fill=blue!30,thick] (0,0) rectangle (4.2,0.5);
\filldraw[fill=red!30,thick] (1.25,0) rectangle (3,0.5);
\filldraw[fill=blue!30,thick] (4.2,0) rectangle (5.4,0.5);
\node at (0.65,0.25) {{$\sigma^{i+1}(0)$}};
\node at (2.1,0.25) {{$\sigma^{i+1}(u)$}};
\node at (3.6,0.25) {{$\sigma^{i+1}(0)$}};
\node at (4.8,0.25) {{$\sigma^{i+1}(1)$}};
\filldraw[fill=brown!30,thick] (0.3,0) rectangle (4.8,-0.5);
\filldraw[fill=green!30,thick] (1.1,0) rectangle (3.8,-0.5);
\node at (0.75,-0.25) {{$0^{k-1}$}};
\node at (2.6,-0.25) {{$w^{\prime}$}};
\node at (4.4,-0.25) {{$y$}};
\node at (2.6,-0.8) { Form $5$};
\end{tikzpicture}
\end{minipage}\vspace{1ex} &
\begin{minipage}{0.5\textwidth}
\begin{tikzpicture}
\filldraw[fill=blue!30,thick] (0,0) rectangle (4.2,0.5);
\filldraw[fill=red!30,thick] (1.25,0) rectangle (3,0.5);
\filldraw[fill=blue!30,thick] (-1.2,0) rectangle (0,0.5);
\filldraw[fill=blue!30,thick] (4.2,0) rectangle (5.4,0.5);
\node at (-0.6,0.25) {{$\sigma^{i+1}(0)$}};
\node at (0.65,0.25) {{$\sigma^{i+1}(0)$}};
\node at (2.1,0.25) {{$\sigma^{i+1}(u)$}};
\node at (3.6,0.25) {{$\sigma^{i+1}(0)$}};
\node at (4.8,0.25) {{$\sigma^{i+1}(1)$}};
\filldraw[fill=brown!30,thick] (-.6,0) rectangle (4.8,-0.5);
\filldraw[fill=green!30,thick] (.3,0) rectangle (3.8,-0.5);
\node at (-.1,-0.25) {{$0^{k-1}$}};
\node at (2.1,-0.25) {{$w^{\prime}$}};
\node at (4.4,-0.25) {{$y$}};
\node at (1.4,-0.8) {Form $6$};
\end{tikzpicture}
\end{minipage}
\end{tabular}
}
\caption{}\label{fig:2}
\end{figure}

When $\text{Card}(\tilde{L}_y)=1$,  write $\tilde{L}_y=\{o_y\}$. By \eqref{eq:wno}, we have
\begin{equation}\label{eq:0k-x}
p_{k}(n,0^{k-1},y)\leq M_{\mathbf{c}}\left(\ell(o_{y})+1\right).
\end{equation}
On the other hand, applying Lemma \ref{lem:w} for $\ell(o_{y})$ and $\alpha=1$, we have 
\[W_{h}=00U_{h}1\in\mathcal{F}_{\mathbf{c}}(\ell+3) \text{ with }|W_{h}|_{1}=h\] 
for all $h=1,\cdots, M_{\mathbf{c}}(\ell+1)$.  Set $t:=n-3^{i+1}\ell-o_y-z_y-1$; so $k-1\leq t <k-1+3^{i+1}$. Therefore,
\[V_{o_{y}}:=0^{t}\sigma^{i+1}(U_{h})\mathrm{pref}_{o_y+1}(\sigma^{i+1}(1))0^{z_{y}}\in\mathcal{W}_{n,0^{k-1},y} \]
and
\[ |V_{o_{y}}|_{1}=2^{i+1}h-|\mathrm{suff}_{3^{i+1}-o_y-1}(\sigma^{i+1}(1))|_1 \]
for all $h=1,\cdots,M_{\mathbf{c}}(\ell+1)$. This implies
that $p_{k}(n,0^{k-1},y)\geq M_{\mathbf{c}}\left(\ell(o_{y})+1\right)$. The previous inequality, \eqref{eq:0k-x} and Corollary \ref{max:min} give the result in the case $\text{Card}(\tilde{L}_y)=1$.

Now suppose $\text{Card}(\tilde{L}_y)=2$ and set $\tilde{L}_y=\{o_y, o_{y}^{\prime}:=o_y+2\cdot 3^i\}$ with $0 \leq o_y \leq 3^{i}-1$. From \eqref{eq:wno}, we know that $n_{o_{y}^{\prime}}\equiv n_{o_{y}}+2^{i}\mod 2^{i+1}$. Therefore, 
\begin{equation}
p_{k}(n,0^{k-1},y) \leq  M_{\mathbf{c}}\left( \ell(o_{y}) +1\right)+M_{\mathbf{c}}\left( \ell(o_{y}^{\prime})+1\right).
\label{eq:0k-x2}
\end{equation}
For every $q\in\tilde{L}_{y}$, applying Lemma \ref{lem:w} for $\ell(q)$ and $\alpha=1$, we have  
\[W_{h,q}=00U_{h,q}1\in\mathcal{F}_{\mathbf{c}}(\ell+3) \text{ with }|W_{h,q}|_{1}=h\] 
for every $h=1,\cdots,M_{\mathbf{c}}(\ell_1+1)$. Set 
$t(q):=n-3^{i+1}\ell(q)-q-z_y-1$; so $k-1\leq t(q) <k-1+3^{i+1}$. 
Therefore, for every $q\in\tilde{L}_{y}$,
\[V_{q}:=0^{t(q)}\sigma^{i+1}(U_{h,q})\mathrm{pref}_{q+1}(\sigma^{i+1}(1))0^{z_{y}}\in \mathcal{W}_{n,0^{k-1},y}\] 
and
\[|V_{q}|_{1}=2^{i+1}h-|\mathrm{suff}_{3^{i+1}-q-1}(\sigma^{i+1}(1))|_1\]
for all $h=1,\cdots,M_{\mathbf{c}}(\ell_1+1)$. Since $|V_{o_{y}}|_{1}\equiv |V_{o_{y}^{\prime}}|_{1} -2^{i}$ (mod $2^{i+1}$), $V_{o_{y}}$ and $V_{o_{y}^{\prime}}$ belongs to different $k$-abelian equivalence classes. Therefore, 
\begin{equation}
p_{k}(n,0^{k-1},y) \geq  M_{\mathbf{c}}\left( \ell(o_{y}) +1\right)+M_{\mathbf{c}}\left( \ell(o_{y}^{\prime})+1\right).
\label{eq:0k-x2v}
\end{equation}
Combining \eqref{eq:0k-x2}, \eqref{eq:0k-x2v} and Corollary \ref{max:min}, the result follows.
\end{proof}

\begin{lemma}\label{lem:x-y}
For two non-zero factors {$x,y\in\mathcal{F}_{\mathbf{c}}(k-1)$}, $\{p_{k}(n,x,y)\}_{n\geq 1}$ is ultimately periodic.
\end{lemma}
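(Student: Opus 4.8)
The plan is to prove the sharper statement that, for non-zero $x,y$, the set $\mathcal{W}_{n,x,y}$ supports at most one abelian class, i.e. $p_{k}(n,x,y)\in\{0,1\}$, and then to show that its support is eventually periodic. By Theorem \ref{thm:B} we already have $p_{k}(n,x,y)=\mathrm{Card}\big(\{|w|_{1}\mid w\in\mathcal{W}_{n,x,y}\}\big)$, so the first task is exactly to prove that $|w|_{1}$ is constant on $\mathcal{W}_{n,x,y}$. Throughout I may assume $n$ large, since altering finitely many terms does not affect eventual periodicity.

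The key structural point is that the number of $1$'s is controlled only by the span between the extreme $1$'s. Since $x$ is non-zero, every $w\in\mathcal{W}_{n,x,y}$ has its first $1$ at a fixed offset $s$ (the position of the first $1$ of $x$); since $y$ is non-zero, its last $1$ sits at a fixed offset from the right, say at position $n-1-t$. Hence all $1$'s of $w$ lie in the sub-factor running from position $s$ to position $n-1-t$, which begins and ends with $1$ and has the fixed length $L:=n-s-t$. The claim then reduces to the sub-lemma: \emph{any factor of $\mathbf{c}$ of length $L$ that begins and ends with $1$ has a number of $1$'s depending only on $L$}. I would prove this by induction on $L$ using \eqref{eq:rec}: grouping positions into consecutive blocks $c_{3j}c_{3j+1}c_{3j+2}=c_{j}\,0\,c_{j}$, a factor $c_{P}\cdots c_{P+L-1}$ with $c_{P}=c_{P+L-1}=1$ forces $c_{\lfloor P/3\rfloor}=c_{\lfloor(P+L-1)/3\rfloor}=1$, and its number of $1$'s equals a boundary term (determined by $P$ and $P+L-1$ modulo $3$) plus $2|c_{a}\cdots c_{b}|_{1}$, where $c_{a}\cdots c_{b}$ is again a factor beginning and ending with $1$ whose length is determined by $L$ and the residues. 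A short check of the admissible residue pairs shows the total depends only on $L$. Consequently $|w|_{1}$ is the same for all $w\in\mathcal{W}_{n,x,y}$, so $p_{k}(n,x,y)$ is the indicator of $\mathcal{W}_{n,x,y}\neq\emptyset$.

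It then remains to prove that $\{n\mid \mathcal{W}_{n,x,y}\neq\emptyset\}$ is eventually periodic, for which I would use the decomposition \eqref{eq:lword}. Because $|x|=|y|=k-1\le 3^{i+1}$, the words $x$ and $y$ each fit inside two consecutive level-$(i+1)$ blocks, so by Lemma \ref{lem:lword} they confine the initial and terminal positions of any $w\in\mathcal{W}_{n,x,y}$ to the residues in $\mathrm{Type}(i+1,x)$ and $\mathrm{Type}(i+1,y)$ modulo $3^{i+1}$; this forces $n$ into finitely many residue classes modulo $3^{i+1}$ (and otherwise $p_{k}(n,x,y)=0$). For $n$ in an admissible class, writing $w$ through \eqref{eq:lword} desubstitutes the existence question in a single step, since $x,y$ penetrate at most one extra block: it becomes the existence of a factor of length $h-1=\lfloor n/3^{i+1}\rfloor+O(1)$ whose first and last letters are prescribed by $x$ and $y$. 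The admissibility of such a factor is an eventually periodic condition of exactly the type computed for $p_{2}$ in Proposition \ref{lem:ab2} (the parity condition behind $p_{2}(\cdot,1,1)$); scaling its period by $3^{i+1}$ shows that $n\mapsto p_{k}(n,x,y)$ is eventually periodic, with period dividing $2\cdot 3^{i+1}$.

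I expect the constancy sub-lemma of the second paragraph to be the main obstacle: it is precisely the rigidity that pins \emph{both} extreme $1$'s and so suppresses the ``floating'' behaviour which, in Lemmas \ref{lem:0k-0k} and \ref{lem:0k-x}, produced the unbounded, $3$-regular counts. The residue bookkeeping in its inductive step, and the one-step lifting argument establishing non-emptiness for all large admissible $n$, are the remaining points requiring care.
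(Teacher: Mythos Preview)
Your plan is correct and matches the paper's proof in all essentials: reduce to $p_{k}(n,x,y)\in\{0,1\}$ by showing $|w|_{1}$ is constant on $\mathcal{W}_{n,x,y}$, then argue that the support is eventually periodic with period dividing $2\cdot 3^{i+1}$ via a level-$(i+1)$ desubstitution. The paper carries out your desubstitution step explicitly as a nine-form case analysis (Figure~\ref{fig:3}), extracting from each $w$ a word $\tilde u\in\{1u1,\,10u1,\,1u01,\,10u01\}$ whose length fixes $n$ modulo $2\cdot 3^{i+1}$ and whose $1$-count fixes $|w|_{1}$; your $\mathrm{Type}(i+1,\cdot)$ bookkeeping amounts to the same thing.

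The one point worth flagging is your ``constancy sub-lemma'' (any factor of $\mathbf{c}$ of length $L$ beginning and ending with $1$ has $|w|_{1}$ determined by $L$ alone). You identify this as the main obstacle and propose a fresh inductive proof, but it is exactly the statement $p_{2}(L,1,1)\le 1$, i.e.\ \eqref{eq:2c} of Proposition~\ref{lem:ab2}, which the paper has already established (via the recurrences \eqref{eq:f2} for $f(i,j)$) and simply cites here. Your modular induction would work too, but you can shortcut it by invoking \eqref{eq:2c} directly; that same equation also supplies the parity condition you need for the support.
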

\begin{proof}
Without loss of generality, we can assume that $n\geq 2\cdot 3^{i+1}+2k-2$ since changing finite terms of a sequence does not change its regularity. Noticing that $3^{i}< k-1\leq 3^{i+1}$, for every pair of factors $x,y$ of length $k-1,$ the occurrence of  each $w\in\mathcal{W}_{n,x,y}$ in $\mathbf{c}$ must be one of the nine forms in Figure \ref{fig:3}.

\begin{figure}[htbp]
\scalebox{0.8}{
\begin{tabular}{cc}
\begin{minipage}{0.52\textwidth}
\begin{tikzpicture}
\filldraw[fill=blue!30,thick] (0,0) rectangle (4.2,0.5);
\filldraw[fill=red!30,thick] (1.25,0) rectangle (3,0.5);
\filldraw[fill=blue!30,thick] (-1.2,0) rectangle (0,0.5);
\node at (-0.6,0.25) {{$\sigma^{i+1}(1)$}};
\node at (0.65,0.25) {{$\sigma^{i+1}(0)$}};
\node at (2.1,0.25) {{$\sigma^{i+1}(u)$}};
\node at (3.6,0.25) {{$\sigma^{i+1}(1)$}};
\filldraw[fill=brown!30,thick] (-0.6,0) rectangle (4,-0.5);
\filldraw[fill=green!30,thick] (0.3,0) rectangle (3.2,-0.5);
\node at (-.1,-0.25) {{$x$}};
\node at (1.7,-0.25) {{$w^{\prime}$}};
\node at (3.6,-0.25) {{$y$}};
\node at (1.55,-0.9) { Form $1$};
\draw[<-,thick] (4.3,0.25) -- (5,0.25) node [right] { A fragment of $\mathbf{c}$};
\draw[<-,thick] (4.3,-0.25) -- (5,-0.25) node [right] { $w=xw^{\prime}y$};

\draw [|<->|,thick] (3.2,-0.7) -- (4,-0.7) node [below,midway] {$ k-1$};
\draw [|<->|,thick] (3,0.7) -- (4,0.7) node [above,midway] {$ o_y+z_y$};
\draw [|<->|,thick] (-0.6,-0.7) -- (0.3,-0.7) node [below,midway] {$ k-1$};
\draw [|<->|,thick] (-1.2,0.7) -- (0.3,0.7) node [above,midway] {$o_x+z_x$};
\end{tikzpicture}
\end{minipage}\vspace{1ex} &
\begin{minipage}{0.4\textwidth}
\begin{tikzpicture}
\filldraw[fill=blue!30,thick] (0,0) rectangle (4.2,0.5);
\filldraw[fill=red!30,thick] (1.25,0) rectangle (3,0.5);
\filldraw[fill=blue!30,thick] (-1.2,0) rectangle (0,0.5);
\node at (-0.6,0.25) {{$\sigma^{i+1}(0)$}};
\node at (0.65,0.25) {{$\sigma^{i+1}(1)$}};
\node at (2.1,0.25) {{$\sigma^{i+1}(u)$}};
\node at (3.6,0.25) {{$\sigma^{i+1}(1)$}};
\filldraw[fill=brown!30,thick] (-0.6,0) rectangle (4,-0.5);
\filldraw[fill=green!30,thick] (0.3,0) rectangle (3.2,-0.5);
\node at (-.1,-0.25) {{$x$}};
\node at (1.7,-0.25) {{$w^{\prime}$}};
\node at (3.6,-0.25) {{$y$}};
\node at (0.8,-0.8) { Form $2$};
\end{tikzpicture}
\vspace{-4ex}
\end{minipage}\vspace{1ex} \\

\begin{minipage}{0.5\textwidth}
\begin{tikzpicture}
\filldraw[fill=blue!30,thick] (0,0) rectangle (4.2,0.5);
\filldraw[fill=red!30,thick] (1.25,0) rectangle (3,0.5);
\filldraw[fill=blue!30,thick] (-1.2,0) rectangle (0,0.5);
\filldraw[fill=blue!30,thick] (4.2,0) rectangle (5.4,0.5);
\node at (-0.6,0.25) {{$\sigma^{i+1}(1)$}};
\node at (0.65,0.25) {{$\sigma^{i+1}(0)$}};
\node at (2.1,0.25) {{$\sigma^{i+1}(u)$}};
\node at (3.6,0.25) {{$\sigma^{i+1}(0)$}};
\node at (4.8,0.25) {{$\sigma^{i+1}(1)$}};
\filldraw[fill=brown!30,thick] (-.6,0) rectangle (4.8,-0.5);
\filldraw[fill=green!30,thick] (.3,0) rectangle (3.8,-0.5);
\node at (-.1,-0.25) {{$x$}};
\node at (2.1,-0.25) {{$w^{\prime}$}};
\node at (4.4,-0.25) {{$y$}};
\node at (1.4,-0.8) { Form $3$};
\end{tikzpicture}
\end{minipage}\vspace{1ex} &
\begin{minipage}{0.5\textwidth}
\begin{tikzpicture}
\filldraw[fill=blue!30,thick] (0,0) rectangle (4.2,0.5);
\filldraw[fill=red!30,thick] (1.25,0) rectangle (3,0.5);
\filldraw[fill=blue!30,thick] (-1.2,0) rectangle (0,0.5);
\filldraw[fill=blue!30,thick] (4.2,0) rectangle (5.4,0.5);
\node at (-0.6,0.25) {{$\sigma^{i+1}(0)$}};
\node at (0.65,0.25) {{$\sigma^{i+1}(1)$}};
\node at (2.1,0.25) {{$\sigma^{i+1}(u)$}};
\node at (3.6,0.25) {{$\sigma^{i+1}(1)$}};
\node at (4.8,0.25) {{$\sigma^{i+1}(0)$}};
\filldraw[fill=brown!30,thick] (-.6,0) rectangle (4.8,-0.5);
\filldraw[fill=green!30,thick] (.3,0) rectangle (3.8,-0.5);
\node at (-.1,-0.25) {{$x$}};
\node at (2.1,-0.25) {{$w^{\prime}$}};
\node at (4.4,-0.25) {{$y$}};
\node at (1.4,-0.8) { Form $4$};
\end{tikzpicture}
\end{minipage}\vspace{1ex} \\
\begin{minipage}{0.5\textwidth}
\begin{tikzpicture}
\filldraw[fill=blue!30,thick] (0,0) rectangle (4.2,0.5);
\filldraw[fill=red!30,thick] (1.25,0) rectangle (3,0.5);
\filldraw[fill=blue!30,thick] (-1.2,0) rectangle (0,0.5);
\filldraw[fill=blue!30,thick] (4.2,0) rectangle (5.4,0.5);
\node at (-0.6,0.25) {{$\sigma^{i+1}(1)$}};
\node at (0.65,0.25) {{$\sigma^{i+1}(0)$}};
\node at (2.1,0.25) {{$\sigma^{i+1}(u)$}};
\node at (3.6,0.25) {{$\sigma^{i+1}(1)$}};
\node at (4.8,0.25) {{$\sigma^{i+1}(0)$}};
\filldraw[fill=brown!30,thick] (-.6,0) rectangle (4.8,-0.5);
\filldraw[fill=green!30,thick] (.3,0) rectangle (3.8,-0.5);
\node at (-.1,-0.25) {{$x$}};
\node at (2.1,-0.25) {{$w^{\prime}$}};
\node at (4.4,-0.25) {{$y$}};
\node at (1.4,-0.8) { Form $5$};
\end{tikzpicture}
\end{minipage}\vspace{1ex}  &

\begin{minipage}{0.5\textwidth}
\begin{tikzpicture}
\filldraw[fill=blue!30,thick] (0,0) rectangle (4.2,0.5);
\filldraw[fill=red!30,thick] (1.25,0) rectangle (3,0.5);
\filldraw[fill=blue!30,thick] (-1.2,0) rectangle (0,0.5);
\filldraw[fill=blue!30,thick] (4.2,0) rectangle (5.4,0.5);
\node at (-0.6,0.25) {{$\sigma^{i+1}(0)$}};
\node at (0.65,0.25) {{$\sigma^{i+1}(1)$}};
\node at (2.1,0.25) {{$\sigma^{i+1}(u)$}};
\node at (3.6,0.25) {{$\sigma^{i+1}(0)$}};
\node at (4.8,0.25) {{$\sigma^{i+1}(1)$}};
\filldraw[fill=brown!30,thick] (-.6,0) rectangle (4.8,-0.5);
\filldraw[fill=green!30,thick] (.3,0) rectangle (3.8,-0.5);
\node at (-.1,-0.25) {{$x$}};
\node at (2.1,-0.25) {{$w^{\prime}$}};
\node at (4.4,-0.25) {{$y$}};
\node at (1.4,-0.8) { Form $6$};
\end{tikzpicture}
\end{minipage}
\end{tabular}}

\scalebox{0.8}{
\begin{tabular}{lcr}
\begin{minipage}{0.38\textwidth}
\begin{tikzpicture}
\filldraw[fill=blue!30,thick] (0,0) rectangle (4.2,0.5);
\filldraw[fill=red!30,thick] (1.25,0) rectangle (3,0.5);
\filldraw[fill=blue!30,thick] (4.2,0) rectangle (5.4,0.5);
\node at (0.65,0.25) {{$\sigma^{i+1}(1)$}};
\node at (2.1,0.25) {{$\sigma^{i+1}(u)$}};
\node at (3.6,0.25) {{$\sigma^{i+1}(1)$}};
\node at (4.8,0.25) {{$\sigma^{i+1}(0)$}};
\filldraw[fill=brown!30,thick] (0.3,0) rectangle (4.8,-0.5);
\filldraw[fill=green!30,thick] (1.1,0) rectangle (3.8,-0.5);
\node at (0.75,-0.25) {{$x$}};
\node at (2.6,-0.25) {{$w^{\prime}$}};
\node at (4.4,-0.25) {{$y$}};
\node at (2.6,-0.8) { Form $7$};
\end{tikzpicture}
\end{minipage} &
\begin{minipage}{0.3\textwidth}
\begin{tikzpicture}
\filldraw[fill=blue!30,thick] (0,0) rectangle (4.2,0.5);
\filldraw[fill=red!30,thick] (1.25,0) rectangle (3,0.5);
\node at (0.65,0.25) {{$\sigma^{i+1}(1)$}};
\node at (2.1,0.25) {{$\sigma^{i+1}(u)$}};
\node at (3.6,0.25) {{$\sigma^{i+1}(1)$}};
\filldraw[fill=brown!30,thick] (0.3,0) rectangle (4,-0.5);
\filldraw[fill=green!30,thick] (1.1,0) rectangle (3.2,-0.5);
\node at (0.75,-0.25) {{$x$}};
\node at (2.1,-0.25) {{$w^{\prime}$}};
\node at (3.6,-0.25) {{$y$}};
\node at (2,-0.8) { Form $8$};
\end{tikzpicture}
\end{minipage} &

\begin{minipage}{0.28\textwidth}
\begin{tikzpicture}
\filldraw[fill=blue!30,thick] (0,0) rectangle (4.2,0.5);
\filldraw[fill=red!30,thick] (1.25,0) rectangle (3,0.5);
\filldraw[fill=blue!30,thick] (4.2,0) rectangle (5.4,0.5);
\node at (0.65,0.25) {{$\sigma^{i+1}(1)$}};
\node at (2.1,0.25) {{$\sigma^{i+1}(u)$}};
\node at (3.6,0.25) {{$\sigma^{i+1}(0)$}};
\node at (4.8,0.25) {{$\sigma^{i+1}(1)$}};
\filldraw[fill=brown!30,thick] (0.3,0) rectangle (4.8,-0.5);
\filldraw[fill=green!30,thick] (1.1,0) rectangle (3.8,-0.5);
\node at (0.75,-0.25) {{$x$}};
\node at (2.6,-0.25) {{$w^{\prime}$}};
\node at (4.4,-0.25) {{$y$}};
\node at (2.6,-0.8) { Form $9$};
\end{tikzpicture}
\end{minipage}
\end{tabular}}
\caption{}\label{fig:3}
\end{figure}

For every fixed pair of $o_{x}\in\tilde{L}_{x}$ and $o_{y}\in\tilde{L}_{y}$, in all the nine forms, we have 
\begin{equation}\label{eq:11n}
n=|w|=3^{i+1}(|\tilde{u}|-1)+\ell(o_{x},o_{y})
\end{equation}
and 
\begin{equation}\label{eq:11w1}
|w|_1=2^{i+1}|\tilde{u}|_1-|\mathrm{pref}_{o_x+z_x-k+2}(\sigma^{i+1}(1))|_{1}-|\mathrm{suff}_{3^{i+1}-o_y-1}(\sigma^{i+1}(1))|_{1},
\end{equation}
where $\ell(o_{x},o_{y}):=(k-1-o_x-z_x+o_y+z_y) <2\cdot 3^{i+1}$ and 
\begin{equation}\label{eq:11tildeu}
\tilde{u} = \begin{cases}
10u1, & \text{ if }w \text{ is of Form }1 \text{ or }5,\\
10u01, & \text{ if }w \text{  is of Form }3,\\
1u01, & \text{ if }w \text{  is of Form }6 \text{ or }9,\\
1u1, & \text{ otherwise.}
\end{cases}
\end{equation}
Further, according to \eqref{eq:2c}, $\tilde{u}$ in \eqref{eq:11tildeu} must satisfy $|\tilde{u}|\equiv 1\mod 2$. This fact and \eqref{eq:11n} yield that $\mathcal{W}_{n,x,y}=\emptyset$ when $n\not\equiv \ell(o_{x},o_{y}) \mod 2\cdot 3^{i+1}$. 

Now we deal with the case $n\equiv \ell(o_{x},o_{y}) \mod 2\cdot 3^{i+1}$. 
Note that by \eqref{eq:2c}, we have $p(2j+1,1,1)=1$ for all $j\geq 1$. This fact and \eqref{eq:11w1} imply that for all  
$n=2\cdot 3^{i+1}j+\ell(o_{x},o_{y})$,  
\[p_k(n,x,y)=\text{Card}(\{|w|_1 \mid w\in\mathcal{W}_{n,x,y}\})=1.\] In conclusion, let $\mathcal{I}_{x,y}=\{2\cdot 3^{i+1}j+\ell(o_{x},o_{y})\mid j\geq 1, o_{x}\in\tilde{L}_{x}, o_{y}\in\tilde{L}_{y}\}$. We have 
\[p_k(n,x,y)=\begin{cases}
1, & \text{if }n\in\mathcal{I}_{x,y},\\
0, & \text{otherwise}.
\end{cases}
\]
Therefore, $\{p_k(n,x,y)\}_{n\geq 1}$ is ultimately periodic with a period $2\cdot 3^{i+1}$.
\end{proof}
\begin{proposition}\label{abel:3}
$\{\mathcal{P}_{\mathbf{c}}^{(k)}(n)\}_{n\geq 1}$ is a $3$-regular sequence for every $k\geq 3.$
\end{proposition}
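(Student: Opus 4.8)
The plan is to read off the regularity of $\mathcal{P}^{(k)}_{\mathbf{c}}$ directly from the decomposition \eqref{eqn:decomp} together with the three lemmas just established. Since $\mathcal{F}_{\mathbf{c}}(k-1)$ is a finite set, \eqref{eqn:decomp} expresses
\[\mathcal{P}^{(k)}_{\mathbf{c}}(n)=\sum_{x,y\in\mathcal{F}_{\mathbf{c}}(k-1)}p_{k}(n,x,y)\]
as a \emph{finite} sum. Hence it suffices to show that each summand $\{p_{k}(n,x,y)\}_{n\geq 1}$ is $3$-regular and then invoke the closure of the class of $3$-regular sequences under finite sums.

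First I would partition the pairs $(x,y)$ according to how many of $x,y$ equal the all-zero word $0^{k-1}$. Every factor $x\in\mathcal{F}_{\mathbf{c}}(k-1)$ is either $0^{k-1}$ or a non-zero factor, so there are exactly four types of pairs. The pair $(0^{k-1},0^{k-1})$ yields a $3$-regular sequence by Lemma \ref{lem:0k-0k}. Each mixed pair $(0^{k-1},y)$ or $(x,0^{k-1})$ with $y,x$ non-zero yields a $3$-regular sequence by Lemma \ref{lem:0k-x}. For each pair $(x,y)$ of non-zero factors, Lemma \ref{lem:x-y} shows that $\{p_{k}(n,x,y)\}_{n\geq 1}$ is ultimately periodic.

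Next I would use two standard facts about $k$-regular sequences from \cite{AS}: every ultimately periodic sequence is $3$-regular (indeed $3$-automatic), and the class of $3$-regular sequences is closed under termwise $\mathbb{Z}$-linear combinations, in particular under finite sums. The first fact upgrades the ultimately periodic summands of the fourth type to $3$-regular sequences, so that after this step every summand in \eqref{eqn:decomp} is $3$-regular. The second fact then lets me conclude that the finite sum $\mathcal{P}^{(k)}_{\mathbf{c}}$ is $3$-regular, which is precisely the assertion of the proposition.

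I do not expect a genuine obstacle at this stage, since all the substantive work is carried out in Lemmas \ref{lem:0k-0k}--\ref{lem:x-y} and in Corollary \ref{max:min} (which supplies the $3$-regularity of $\{M_{\mathbf{c}}(n)\}_{n\geq 1}$ underlying those lemmas). The only points requiring care are bookkeeping: verifying that the four types genuinely exhaust all pairs $(x,y)$, that the number of summands is finite (immediate from the finiteness of $\mathcal{F}_{\mathbf{c}}(k-1)$), and citing the closure properties in the correct form. A minor subtlety worth flagging is that the argument applies uniformly for all $k\geq 3$ because the decomposition and the three lemmas are stated for the integer $i$ determined by $3^{i}+1<k\leq 3^{i+1}+1$; no separate small-$k$ analysis beyond the already-treated $2$-abelian case is needed.
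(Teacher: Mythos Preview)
Your proposal is correct and follows exactly the paper's approach: the paper's proof is a one-line invocation of Lemmas \ref{lem:0k-0k}, \ref{lem:0k-x}, \ref{lem:x-y} and the decomposition \eqref{eqn:decomp}, and you have simply spelled out the implicit bookkeeping (partitioning the pairs $(x,y)$, using that ultimately periodic sequences are $3$-regular, and closure under finite sums).
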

\begin{proof}
It follows directly from Lemmas \ref{lem:0k-0k}, \ref{lem:0k-x} and \ref{lem:x-y} and (\ref{eqn:decomp}).
\end{proof}
Theorem \ref{thm:A} follows from Propositions \ref{abel:1}, \ref{lem:ab2} and \ref{abel:3}.

\end{document}